\renewcommand\@seccntformat[1]{\csname the#1\endcsname.\quad}
\newtheorem{theorem}{Theorem}[section]
\newtheorem{corollary}[theorem]{Corollary}
\newtheorem{lemma}[theorem]{Lemma}
\newtheorem{remark}[theorem]{Remark}
\newcommand{\norm}[3]{\ensuremath{\left\Vert#1\right\Vert_{#2}^{#3}}}
\newcommand{\abs}[3]{\ensuremath{\left\vert#1\right\vert_{#2}^{#3}}}
\DeclareMathOperator{\supp}{supp}
\DeclareMathOperator{\sg}{sgn}
\DeclareMathOperator{\card}{card}
\DeclareMathOperator{\id}{Id}
\begin{document}

\author[M. Ciesielski \& G. Lewicki]{Maciej Ciesielski$^{1*}$ and Grzegorz Lewicki}

\title[Sequence Lorentz spaces and their geometric structure]{Sequence Lorentz spaces and their geometric structure}

\begin{abstract}
	This article is dedicated to geometric structure of the Lorentz and Marcinkiewicz spaces in case of the pure atomic measure. We study complete criteria for order continuity, the Fatou property, strict monotonicity and strict convexity in the sequence Lorentz spaces $\gamma_{p,w}$. Next, we present a full characterization of extreme points of the unit ball in the sequence Lorentz space $\gamma_{1,w}$. We also establish a complete description with an isometry of the dual and predual spaces of the sequence Lorentz spaces $\gamma_{1,w}$ written in terms of the Marcinkiewicz spaces. Finally, we show a fundamental application of geometric structure of $\gamma_{1,w}$ to one-complemented subspaces of $\gamma_{1,w}$.	
\end{abstract}

\maketitle

\bigskip\ 

{\small \underline{2000 Mathematics Subjects Classification: 46E30, 46B20, 46B28.}\hspace{1.5cm}\
\ \ \quad\ \quad  }\smallskip\ 

{\small \underline{Key Words and Phrases:}\hspace{0.15in} Lorentz and Marcinkiewicz spaces, strict monotonicity, strict convexity, order continuity, extreme point, existence set, one-complemented subspace.}

\bigskip\ \ 

\section{Introduction}

Geometric structures with application of the Lorentz spaces $\Gamma_{p,w}$ and Marcinkiewicz spaces $M_\phi$ in case of the non-atomic measure have been investigated extensively by many authors \cite{Cies-geom,CKKP,CiesKamPluc,KMGam,KamMal}. In contrast to the non-atomic case there are only few papers concerning geometric structure of sequence Lorentz and Marcinkiewicz spaces. The first crucial paper devoted to the Marcinkiewicz spaces appeared in 2004 \cite{KamLee}, where authors have studied the biduals and order continuous ideals of the Marcinkiewicz spaces for the pure atomic measure. The next significant paper was published in 2009 \cite{KamLeeLew}, in which there has been investigated, among others, strict monotonicity, smooth points and extreme points with application to one-complemented subspaces. For other results concerning the issue devoted to one-complemented subspaces please see a.g. \cite{DaEn,JamKamLew,KamLew}.

The purpose of this article is to explore geometric properties of the sequence Lorentz spaces $\gamma_{p,w}$ and its dual and predual spaces. It is worth mentioning that we present an application of geometric properties to a characterization of one-complemented subspaces in the Lorentz spaces $\gamma_{p,w}$ in case of the pure atomic measure. It is necessary to mention that a characterization of geometric structure of the sequence Lorentz and Marcinkiewicz spaces does not follow immediately as a consequence of well known results from the case of non-atomic measure in general. 
 
The paper is organized as follows. In section 2, we present the needed terminology. In section 3, we show an auxiliary result devoted to a relationship between the global convergence in measure of a sequence $(x_n)\subset\ell^0$ and the pointwise convergence of its sequence of decreasing rearrangements $(x_n^*)$. In case of the pure atomic measure, we also establish a correspondence between an identity of signs of the values for two different sequences in $\ell^0$ and an additivity of the decreasing rearrangement operation for these sequences. Section 4 is devoted to an investigation of geometric structure of sequence Lorentz spaces $\gamma_{p,w}$. Namely, we focus on complete criteria for order continuity and the Fatou property in Lorentz spaces for the pure atomic measure. Next, we present a characterization of strict monotonicity and strict convexity of $\gamma_{p,w}$ written in terms of the weight sequence $w$. In spirit of the previous result, we describe an equivalent condition for extreme points of the unit ball in the sequence Lorentz space $\gamma_{1,w}$. In section 5, we solve the essential problem showing a full description of the dual and predual spaces of the sequence Lorentz space $\gamma_{1,w}$. First, we answer a crucial question under which condition does an isometric isomorphism exist between the dual space of the sequence Lorentz space $\gamma_{1,w}$ and the sequence Marcinkiewicz space $m_\phi$. Next, we discuss complete criteria which guarantee that the predual space of the sequence Lorentz space $\gamma_{1,w}$ coincides with the sequence Marcinkiewicz space $m_\phi^0$. Additionally, we investigate necessary condition for the isometry between the predual of $\gamma_{1,w}$ and the Marcinkiewicz space $m_\phi^0$. In section 5, we present an application of geometric properties of the sequence Lorentz space $\gamma_{1,w}$ to a characterization of one-complemented subspaces. Namely, using an isometry between the classical Lorentz space $d_{1,w}$ and the Lorentz space $\gamma_{1,w}$, we prove that there exists norm one projection on any nontrivial existence subspace of $\gamma_{1,w}$. Additionally, by the previous investigation and in view of \cite{KamLeeLew}, we establish a full characterization of smooth points in the sequence Lorentz space $\gamma_{1,w}$ and its predual and dual spaces. Finally, we study an equivalent condition for an extreme points in the dual space of the sequence Lorentz space $\gamma_{1,w}$.

\section{Preliminaries}

Let $\mathbb{R}$, $\mathbb{R}^+$ and $\mathbb{N}$ be the sets of reals, nonnegative reals and positive integers, respectively.  A mapping $\phi:\mathbb{N}\rightarrow\mathbb{R}^+$ is said to be \textit{quasiconcave} if $\phi(t)$ is increasing and $\phi(t)/t$ is decreasing on $\mathbb{N}$ and also $\phi(n)>0$ for all $n\in\mathbb{N}$. We denote by $\ell^{0}$ the set of all real sequences, and by $S_X$ (resp. $B_X)$ the unit sphere (resp. the closed unit ball) in a Banach space $(X,\norm{\cdot}{X}{})$. Let us denote by $(e_i)_{i=1}^\infty$ a standard basis in $\mathbb{R}^\infty$. A sequence quasi-Banach lattice $(E,\Vert \cdot \Vert _{E})$ is said to be a \textit{quasi-Banach sequence space} (or a \textit{quasi-K\"othe sequence space}) if it is a sequence sublattice of $\ell^{0}$ and holds the following conditions
\begin{itemize}
\item[$(1)$] If $x\in\ell^0$, $y\in E$ and $|x|\leq|y|$, then $x\in E$ and $\|x\|_E\leq\|y\|_E$.
\item[$(2)$] There exists a strictly positive $x\in E$.
\end{itemize}
For simplicity let us use the short symbol $E^{+}={\{x \in E:x \ge 0\}}$. 
% OC, Fatou property, UR
An element $x\in E$ is called a \textit{point of order continuity}, shortly $x\in{E_a}$, if for any
sequence $(x_{n})\subset{}E^+$ such that $x_{n}\leq \left\vert x\right\vert 
$ and $x_{n}\rightarrow 0$ pointwise we have $\left\Vert x_{n}\right\Vert
_{E}\rightarrow 0.$ A quasi-Banach sequence space $E$ is said to be \textit{order continuous}, shortly $E\in \left( OC\right)$, if any element $x\in{}E$ is a point of order continuity. A space $E$ is said to be \textit{reflexive} if $E$ and its associate space $E'$ are order continuous. Given a quasi-Banach sequence space $E$ is said to have the \textit{Fatou property} if for all $\left( x_{n}\right)\subset{}E^+$, $\sup_{n\in \mathbb{N}}\Vert x_{n}\Vert
_{E}<\infty$ and $x_{n}\uparrow x\in\ell^{0}$, then $x\in E$ and $\Vert x_{n}\Vert _{E}\uparrow\Vert x\Vert
_{E}$ (see \cite{LinTza,BS}). We say that $E$ is \textit{strictly monotone} if for any $x,y\in{E^+}$ such that $x\leq{y}$ and $x\neq{y}$ we have $\norm{x}{E}{}<\norm{y}{E}{}$.

% strict rotundity
Let $(X,\norm{\cdot}{X}{})$ be a Banach space. Recall that $x\in{S_X}$ is an \textit{extreme point} of $B_X$ if for any $y,z\in{S_X}$ such that $x=(y+z)/2$ we have $x=y=z$. A Banach space $X$ is called \textit{rotund} or \textit{strictly convex} if any $x\in{}S_X$ is an extreme point of $B_X$. An element $x\in{X}$ is called a \textit{smooth point} of $X$ if there exists a unique linear bounded functional $f\in{S_{X^*}}$ such that $f(x)=\norm{x}{X}{}$. 

%symmetric spaces and properties
The \textit{distribution} for any sequence $x\in\ell^{0}$ is defined by 
\begin{equation*}
d_{x}(\lambda) =\card\left\{k\in\mathbb{N}:\left\vert x\left(k\right) \right\vert >\lambda \right\},\qquad\lambda \geq 0.
\end{equation*}
For any sequence $x\in\ell^{0}$ its \textit{decreasing rearrangement} is given by 
\begin{equation*}
x^{*}\left(n\right) =\inf \left\{ \lambda\geq 0:d_{x}\left( \lambda
\right)\leq n-1\right\}, \text{ \ \ }\quad{n\in\mathbb{N}}.
\end{equation*}
In this article we use the notation $x^{*}(\infty)=\lim_{n\rightarrow\infty}x^{*}(n)$. For any sequence $x\in\ell^{0}$ we denote the \textit{maximal sequence} of $x^{\ast }$ by 
\begin{equation*}
x^{\ast \ast }(n)=\frac{1}{n}\sum_{i=1}^{n}x^{*}(i).
\end{equation*}
It is easy to notice that for any point $x\in\ell^{0}$, $x^{\ast }\leq x^{\ast \ast },$ $x^{\ast \ast }$ is decreasing, continuous and subadditive. For more details of $d_{x}$, $x^{\ast }$ and $x^{\ast \ast }$ see \cite{BS,KPS}. 

We say that two sequences $x,y\in{\ell^0}$ are \textit{equimeasurable}, shortly $x\sim y$, if $d_x=d_y$. A quasi-Banach sequence space $(E,\norm{\cdot}{E}{})$ is called \textit{symmetric} or \textit{rearrangement invariant} (r.i. for short) if whenever $x\in\ell^{0}$ and $y\in E$ such that $x \sim y,$ then $x\in E$ and $\Vert x\Vert_{E}=\Vert y\Vert _{E}$. The \textit{fundamental sequence} $\phi_E$ of a symmetric space $E$ we define as follows $\phi_{E}(n)=\Vert\chi_{\{i\in\mathbb{N}:i\leq n\}}\Vert_{E}$ for any $n\in\mathbb{N}$ (see \cite{BS}). Let $0<p<\infty$ and $w=(w(n))_{n\in\mathbb{N}}$ be a nonnegative real sequence and let for any $n\in\mathbb{N}$
\begin{equation*}
W(n)=\sum_{i=1}^n{w(i)}\quad\textnormal{and}\quad{W}_p(n)=n^p\sum_{i=n+1}^\infty\frac{w(i)}{i^p}<\infty.
\end{equation*} 
For short notation the sequence $w$ is called a nonnegative weight sequence. In the whole paper, unless we say otherwise we suppose that $w$ a nonnegative weight sequence is nontrivial, i.e. there is $n\in\mathbb{N}$ such that $w(n)>0$. Now, we recall the sequence Lorentz space $d_{1,w}$ which is a subspace of $\ell^0$ such that for any sequence $x=(x(n))_{n\in\mathbb{N}}\in{d_{1,w}}$ we have
\begin{equation*}
\norm{x}{d_{1,w}}{}=\sum_{i=1}^\infty{x}^{*}(n)w(n)<\infty.
\end{equation*}
It is well known that the Lorentz space $d_{1,w}$ is a symmetric space with the Fatou property (see \cite{KamMal}). The sequence Lorentz space $\gamma_{p,w}$ is a collection of all real sequences $x=(x(n))_{n\in\mathbb{N}}$ such that 
\begin{equation*}
\norm{x}{\gamma_{p,w}}{}=\left(\sum_{i=1}^\infty({x}^{**}(n))^pw(n)\right)^{1/p}<\infty.
\end{equation*}
Let us notice that for any nonnegative sequence $w=(w(n))_{n\in\mathbb{N}}$ the sequence Lorentz space $\gamma_{p,w}$ is a r.i. (quasi-)Banach sequence space equipped with the (quasi-)norm $\norm{\cdot}{\gamma_{p,w}}{}$. It is easy to observe that the fundamental sequence of the Lorentz space $\gamma_{p,w}$ is given by $\phi_{\gamma_{p,w}}(n)=\norm{\chi_{\{i\leq{n},i\in\mathbb{N}\}}}{\gamma_{p,w}}{}=(W(n)+W_p(n))^{1/p}$ for every $n\in\mathbb{N}$. Let $\phi$ be a quasiconcave sequence. The Marcinkiewicz space $m_{\phi}$ and (resp. $m_\phi^0$) consists of all real sequences $x=(x(n))_{n\in\mathbb{N}}$ such that 
\begin{equation*}
\norm{x}{m_\phi}{}=\sup_{n\in\mathbb{N}}\left\{x^{**}(n)\phi(n)\right\}<\infty\quad\left(\textnormal{resp.}\quad{m_\phi^0\subset}m_\phi\quad\textnormal{and}\quad\lim_{n\rightarrow\infty}x^{**}(n)\phi(n)=0\right).
\end{equation*}
Recall that $m_\phi$ and $m_\phi^0$ are symmetric spaces equipped with the norm $\norm{\cdot}{m_\phi}{}$ (for more details see \cite{KamLee}).

\section{properties of decreasing rearrangement for a pure atomic measure}

In this section, first we present an auxiliary lemma devoted to a correspondence between the global convergence in measure on $\mathbb{N}$ of an arbitrary sequence of elements in $\ell^0$ to an element in $\ell^0$ and the pointwise convergence of their decreasing rearrangements. Although the similar result emerges in case of the non-atomic measure space (see \cite{KPS}), the proof of it is not valid in case of the pure atomic measure space. It is worth mentioning that in the pure atomic measure space the proof of the wanted result is quite long and requires new techniques. 

\begin{lemma}\label{lem:properties}
	Let $x_m,x\in\ell^0$ for all $m\in\mathbb{N}$. If $x_m$ converges to $x$ globally in measure, then $x_m^*$ converges to $x^*$ on $\mathbb{N}$.
\end{lemma}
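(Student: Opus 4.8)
The plan is to unwind the hypothesis into an honest uniform-convergence statement and then run a Lipschitz-type estimate for the decreasing rearrangement. First I would observe that on $\mathbb{N}$ with the counting measure, ``$x_m\to x$ globally in measure'' says exactly that $d_{x_m-x}(\varepsilon)\to 0$ for every $\varepsilon>0$; since each $d_{x_m-x}(\varepsilon)$ is a nonnegative integer or $+\infty$, convergence to $0$ forces $d_{x_m-x}(\varepsilon)=0$ for all large $m$, i.e.\ $\sup_{k\in\mathbb{N}}|x_m(k)-x(k)|\le\varepsilon$ eventually. So the hypothesis amounts to $\|x_m-x\|_{\ell^\infty}\to 0$ (and if some of the sequences are unbounded, then for large $m$ they all are, and $x_m^*\equiv x^*\equiv\infty$, so there is nothing to prove). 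This is precisely where the atomic case diverges from the non-atomic one: in the non-atomic setting ``in measure'' is strictly weaker than uniform convergence, and the classical Riesz-subsequence argument of \cite{KPS} only yields convergence of the rearrangements at continuity points of $x^*$ --- but here the values $x^*(n)$, $n\in\mathbb{N}$, sit exactly at the potential jumps of the step function $x^*$ on $\mathbb{R}^+$, so that argument conveys no information about them.

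The core is then the pointwise estimate
\[
|x_m^*(n)-x^*(n)|\le\|x_m-x\|_{\ell^\infty}\qquad\text{for all }m,n\in\mathbb{N}.
\]
To obtain it I would put $\delta_m=\|x_m-x\|_{\ell^\infty}$ and note that $|x_m(k)|\le|x(k)|+\delta_m$ and $|x(k)|\le|x_m(k)|+\delta_m$ for every $k$, whence the inclusions $\{|x_m|>\lambda+\delta_m\}\subseteq\{|x|>\lambda\}\subseteq\{|x_m|>\lambda-\delta_m\}$ and so $d_{x_m}(\lambda+\delta_m)\le d_x(\lambda)\le d_{x_m}(\lambda-\delta_m)$ for $\lambda\ge\delta_m$. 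Feeding these into $z^*(n)=\inf\{\lambda\ge 0:d_z(\lambda)\le n-1\}$ and taking infima over the relevant thresholds yields $x^*(n)-\delta_m\le x_m^*(n)\le x^*(n)+\delta_m$. Combined with the reduction above, this gives $x_m^*(n)\to x^*(n)$ for every fixed $n$ --- in fact uniformly in $n$, although only the pointwise conclusion is asserted.

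The step I expect to be the real obstacle is establishing this estimate honestly in the atomic setting, since the non-atomic technique is unavailable and the distribution-function bookkeeping has to be done by hand: because $d_z$ is integer-valued and merely right-continuous, one must handle with care the exact equivalences governing the infimum --- in particular $x^*(n)>\lambda\iff d_z(\lambda)\ge n$ and $d_z(x^*(n))\le n-1$ (by right-continuity of $d_z$) --- as well as the degenerate indices with $x^*(n)\in\{0,\infty\}$. A more self-contained route, bypassing $\|\cdot\|_{\ell^\infty}$ entirely, argues the two bounds directly: for the lower bound, fix $\varepsilon>0$, pick $n$ indices $k$ with $|x(k)|>x^*(n)-\varepsilon$ (possible since $d_x(x^*(n)-\varepsilon)\ge n$; if $x^*(n)\le\varepsilon$ the bound is vacuous) and use that global convergence in measure pushes all of them into $\{|x_m|>x^*(n)-2\varepsilon\}$ for large $m$, so $d_{x_m}(x^*(n)-2\varepsilon)\ge n$ and $x_m^*(n)\ge x^*(n)-2\varepsilon$; for the upper bound, use $d_x(x^*(n))\le n-1$ together with global convergence in measure to get $d_{x_m}(x^*(n)+\varepsilon)\le n-1$, hence $x_m^*(n)\le x^*(n)+\varepsilon$, for large $m$. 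Either way the argument boils down to a handful of distribution-function comparisons, the conceptual content being just the reduction in the first paragraph.
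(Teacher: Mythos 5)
Your proof is correct, and it takes a genuinely different and considerably shorter route than the paper's. The paper in fact performs your opening reduction as well: from $\card\{n\in\mathbb{N}:|x_m(n)-x(n)|>\delta\}<1$ it deduces $|x_m(n)-x(n)|\le\delta$ for every $n$, i.e.\ eventual uniform closeness. But it does not then exploit the contraction property of the rearrangement map; instead it splits into three cases according to the structure of the level sets $N_j=\{n:x(n)=b_j\}$ of $x$ and tracks by hand how $x_m^*$ decomposes over these sets. Your estimate $|x_m^*(n)-x^*(n)|\le\Vert x_m-x\Vert_{\ell^\infty}$ --- obtained either from the sandwich $d_{x_m}(\lambda+\delta_m)\le d_x(\lambda)\le d_{x_m}(\lambda-\delta_m)$ or, equivalently, from $|x_m|\le|x|+\delta_m$ together with monotonicity of $*$ and $(|x|+\delta_m)^*=x^*+\delta_m$ --- collapses all of this into a few lines and actually delivers uniform convergence of $x_m^*$ to $x^*$, more than the lemma asserts. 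It is also more robust: the paper's argument presupposes that the value set $B$ of $x$ admits a strictly decreasing enumeration, which fails for unbounded $x$ and for value sets accumulating at their supremum from below, whereas your argument is insensitive to the structure of $B$. The only points to write out carefully are exactly the ones you flag: the degenerate indices with $x^*(n)=\infty$ (if $x$ is unbounded then $d_x(\lambda)=\infty$ for every $\lambda$, so $x^*\equiv\infty$, and the sandwich forces $x_m^*\equiv\infty$ for large $m$) and the identity $(|x|+\delta)^*=x^*+\delta$, both of which are routine computations with the distribution function.
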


\begin{proof}
	Let $(x_m)\subset\ell^0$, $x\in\ell^0$ be such that $x_m\rightarrow{x}$ globally in measure. Since for any $\epsilon>0$ and $m\in\mathbb{N}$ we have
	\begin{equation*}
	\card\{n\in\mathbb{N}:\abs{x_m(n)-x(n)}{}{}>\epsilon\}\geq\card\{n\in\mathbb{N}:\abs{|x_m(n)|-|x(n)|}{}{}>\epsilon\},
	\end{equation*}
	without loss of generality we may assume that $x\geq{0}$ and $x_m\geq{0}$ for all $n\in\mathbb{N}$. Let $B=\{b_i\}$ be a set of all values for a function $x:\mathbb{N}\rightarrow\mathbb{R}^+$. Define for any $i\in\{1,\dots,\card(B)\}$,
	\begin{equation*}
	N_i=\{n\in\mathbb{N}:x(n)=b_i\},\quad\textnormal{and}\quad c_i=\sum_{j=1}^{i}\card(N_j),\quad{c_0}=0.
 	\end{equation*}
	Without loss of generality we may assume that $(b_i)$ is strictly decreasing. Now we present the proof in three cases.\\
	\textit{Case $1.$} Suppose that $\card(N_1)=\infty$. Then, it is easy to see that $x^*(n)=b_1\chi_{\mathbb{N}}$. If $b_1=0$ then for all $m\geq M_{\delta_1}$ we have
	$$d_{x_m}(\delta_1)=\card\{n\in\mathbb{N}:|x_m(n)|>\delta_1\}<1.$$
	Hence, since $d_{x_m^*}(\delta_1)=d_{x_m}(\delta_1)$ for every $m\geq{M_{\delta_1}}$, we get $x_m^*\rightarrow{0}$ globally in measure, whence we infer that $x_m^*\rightarrow{0}$ pointwise. In case when $B=\{b_1\}$ then we take $b_2=0$. Denote $\delta_1=(b_1-b_2)/4$. Since $x_m\rightarrow{x}$ globally in measure, there exists $M_{\delta_1}\in\mathbb{N}$ such that for all $m\geq M_{\delta_1}$, 
	\begin{equation}\label{equ:1:converg}
	\card\{n\in\mathbb{N}:\abs{x_m(n)-x(n)}{}{}>\delta_1\}<1.
	\end{equation}
	Now, we claim that for any $n\in\mathbb{N}$, $x_m^*(n)\rightarrow{x^*(n)}$. Indeed, by \eqref{equ:1:converg} we conclude that for any $m\geq{M_{\delta_1}}$ and $n\in\mathbb{N}$,
	\begin{equation*}
	\abs{x(n)-x_m(n)}{}{}\leq\delta_1.
	\end{equation*}
	If $\card(\mathbb{N}\setminus{N_1})=0$, then we are done. Otherwise, for any $n\in{N_1}$ and $k\in\mathbb{N}\setminus{N_1}$ we observe that 
	\begin{equation*}
	x_m(n)\geq{x(n)-\delta_1}=b_1+\frac{3(b_1-b_2)}{4}=b_1+3\delta_1> x(k)+3\delta_1\geq x_m(k)+2\delta_1
	\end{equation*}
	for all $m\geq M_{\delta_1}$. Consequently, for every $m\geq M_{\delta_1}$ we obtain $x_m^*=\left(x_m\chi_{N_1}\right)^*$ and also $\abs{b_1-x_m(n)}{}{}\leq\delta_1$ for each $n\in{N_1}$. Therefore, for all $m\geq{M_{\delta_1}}$ and $n\in\mathbb{N}$ it is easy to notice that $$\delta_1\geq\abs{b_1-x_m^*(n)}{}{}=\abs{x^*(n)-x_m^*(n)}{}{}.$$
	\textit{Case $2.$} Assume that there exists $b_{j_0}\in{B}\setminus\{0\}$ such that $\card(N_{j_0})=\infty$ and $0<\card(N_j)<\infty$ for any $j\in\{1,\dots,j_0-1\}$. 
	Then, we have
	\begin{equation}\label{equ:2:converg}
	x^*(n)=\left(\sum_{j=1}^{j_0}b_j\chi_{N_j}\right)^*(n)=\sum_{j=1}^{j_0}b_j\chi_{\{i\in\mathbb{N}:c_{j-1}+1\leq{i}\leq{c_j}\}}(n).
	\end{equation}
	In case when $\card{(B)}={j_0}$ then we assume that $b_{j_0+1}=0$. Denote for any $i\in\{1,\dots,\card(B)\}$,
	\begin{equation*}
	\delta_i=\frac{b_i-b_{i+1}}{4}\qquad\textnormal{and}\qquad\delta=\min_{1\leq i\leq{j_0}}\{\delta_i\}.
	\end{equation*}
	Since $x_m\rightarrow{x}$ globally in measure, there exists $M_{\delta}\in\mathbb{N}$ such that for all $m\geq M_{\delta}$, 
	\begin{equation*}
	\card\{n\in\mathbb{N}:\abs{x_m(n)-x(n)}{}{}>\delta\}<1.
	\end{equation*}
	Therefore, for any $m\geq{M_\delta}$ and $n_i\in{N_i}$ where $1\leq i\leq j_0$ we have
	\begin{equation}\label{equ:3:converg}
	\delta\geq\abs{x(n_i)-x_m(n_i)}{}{}=\abs{b_i-x_m(n_i)}{}{}.
	\end{equation}
	Hence, for all $m\geq{M_\delta}$ and $n_i\in{N_i}$ where $1\leq i\leq j_0-1$ we easily observe
	\begin{equation*}
	x_m(n_i)=b_i-\delta\geq b_{i+1}+3\delta\geq x_m(n_{i+1})+2\delta. 
	\end{equation*}
	In consequence, by \eqref{equ:3:converg} we get for every $m\geq{M_\delta}$ and $n\in\mathbb{N}$,
	\begin{equation}\label{equ:4:converg}
	x_m^*(n)=\left(\sum_{j=1}^{j_0}x_m\chi_{N_j}\right)^*(n)=\sum_{j=1}^{j_0}\left(x_m\chi_{N_j}\right)^*(n-c_{j-1})\chi_{\{i\in\mathbb{N}:c_{j-1}+1\leq{i}\leq{c_j}\}}(n).
	\end{equation}
	Clearly, there exists $\sigma:\mathbb{N}\rightarrow\bigcup_{j=1}^{j_0}N_j$ a permutation such that $x^*(n)=x(\sigma(n))$ for all $n\in\mathbb{N}$. Thus, for any $n\in\mathbb{N}$ there exists $j\in\{1,\dots,j_0\}$ such that $\sigma(n)\in{N_j}$ and by \eqref{equ:3:converg} we obtain
	\begin{equation*}
		\delta\geq|x_m(\sigma(n))-x(\sigma(n))|=|x_m(\sigma(n))-b_j|=|(x_m\chi_{N_j})^*(n-c_{j-1})-b_j|
	\end{equation*}
	for all $m\geq{M_{\delta}}$. Therefore, by \eqref{equ:2:converg} and \eqref{equ:4:converg} we infer that
	\begin{align*}
	x_m^*(n)=&\sum_{j=1}^{j_0}\left(x_m\chi_{N_j}\right)^*(n-c_{j-1})\chi_{\{i\in\mathbb{N}:c_{j-1}+1\leq{i}\leq{c_j}\}}(n)\\
	&\rightarrow\sum_{j=1}^{j_0}b_j\chi_{\{i\in\mathbb{N}:c_{j-1}+1\leq{i}\leq{c_j}\}}(n)=x^*(n).
	\end{align*}
	\textit{Case $3.$} Suppose that for any $b_j\in{B}\setminus\{0\}$ we have $\card(N_j)<\infty$. If $\card(B)<\infty$ then without loss of generality we may assume that $j_0=\card(B)$ and $b_{j_0}=0$. Next, letting for any $i\in\{1,\dots,j_0-1\}$,
	\begin{equation*}
	\delta_i=\frac{b_i-b_{i+1}}{4}\qquad\textnormal{and}\qquad\delta=\min_{1\leq i\leq{j_0-1}}\{\delta_i\},
	\end{equation*}	
	and proceeding analogously as in case $2$ we may show that $x_m^*\rightarrow{x^*}$ on $\mathbb{N}$, in case when $\card(B)<\infty$. Now, assume that $\card(B)=\infty$. Then, since $(b_j)$ is strictly decreasing and bounded we conclude 
	\begin{equation*}
	\lim_{j\rightarrow\infty}b_j=b\geq{0}.
	\end{equation*}
	First, let us consider that $b=0$. Let $\epsilon>0$. Then, there exists $j_0\in\mathbb{N}$ such that for all $j\geq{j_0}$ we have 
	\begin{equation}\label{equ:5:converg}
	0<b_j<\frac{\epsilon}{4}\qquad\textnormal{and}\qquad{b_{j_0-1}\geq\frac{\epsilon}{4}}.
	\end{equation}
	Define for any $i\in\{1,\dots,j_0\}$,
	\begin{equation*}
	\delta_i=\frac{b_i-b_{i+1}}{4}\qquad\textnormal{and}\qquad\delta=\min\left\{\frac{\epsilon/4-b_{j_0}}{4},\min_{1\leq i\leq{j_0}}\{\delta_i\}\right\}.
	\end{equation*}
	Similarly as in case $2$ there is $M_\delta\in\mathbb{N}$ such that for all $m\geq{M_\delta}$, $n\in\mathbb{N}$ and $k\in\bigcup_{j\geq j_0}{N_j}$ we get
	\begin{equation}\label{equ:6:converg}
	\abs{x_m(n)-x(n)}{}{}\leq\delta\quad\textnormal{and}\quad{}x_m(k)\leq{\delta}+x(k)\leq\delta+b_{j_0}<\delta+\frac{\epsilon}{4}<\frac{\epsilon}{2}.
	\end{equation}
	Moreover, we may observe that 
	\begin{equation*}
	x_m(n_i)\geq x_m(n_{i+1})+2\delta
	\end{equation*}
	for every $m\geq{M_\delta}$ and $n_i\in{N_i}$ where $i\in\{1,\dots,j_0-1\}$. Next, assuming that $\sigma:\mathbb{N}\rightarrow\bigcup_{j=1}^{\infty}N_j$ is a permutation such that $x^*(n)=x(\sigma(n))$ for all $n\in\mathbb{N}$, then for any $n\in\mathbb{N}$ with $n\leq{c_{j_0-1}}$ there exists $j\in\{1,\dots,j_0-1\}$ such that $\sigma(n)\in{N_j}$ and by \eqref{equ:6:converg} we obtain
	\begin{align*}
	\epsilon>\delta&\geq|x_m(\sigma(n))-x(\sigma(n))|=|x_m(\sigma(n))-b_j|\\
	&=|(x_m\chi_{N_j})^*(n-c_{j-1})-b_j|=\abs{\left(\sum_{j=1}^{j_0-1}x_m\chi_{N_j}\right)^*(n)-b_j}{}{}
	\end{align*}
	for all $m\geq{M_{\delta}}$. On the other hand, if $n>c_{j_0-1}$ then there is $j\geq{j_0}$ such that $\sigma(n)\in N_j$ and by \eqref{equ:5:converg} and \eqref{equ:6:converg} it follows that
	\begin{align*}
	\abs{\left(\sum_{j=j_0}^{\infty}x_m\chi_{N_j}\right)^*(n-c_{j_0-1})-x^*(n)}{}{}&=|x_m(\sigma(n))\chi_{N_j}(\sigma(n))-x(\sigma(n))|\\
	&=|x_m(\sigma(n))-b_j|<\epsilon
	\end{align*}
	for all $m\geq{M_{\delta}}$. Now, let us notice that for every $n\in\mathbb{N}$,
	\begin{equation*}
	x^*(n)=\sum_{j=1}^{\infty}b_j\chi_{\{i\in\mathbb{N}:c_{j-1}+1\leq{i}\leq{c_j}\}}(n)
	\end{equation*}	
	and 
	\begin{equation*}
	x_m^*(n)=
	\begin{cases}
	\left(\sum_{j=1}^{j_0-1}x_m\chi_{N_j}\right)^*(n)&\textnormal{if}\quad{n}\leq{c_{j_0-1}},\\
	\left(\sum_{j=j_0}^{\infty}x_m\chi_{N_j}\right)^*(n-c_{j_0-1})&\textnormal{if}\quad{n}>{c_{j_0-1}}.
	\end{cases}
	\end{equation*}
	Hence, we infer that for any $m\geq{M_\delta}$ and $n\in\mathbb{N}$,
	\begin{equation*}
	x_m^*(n)\rightarrow{x^*(n)}.
	\end{equation*}
	Now, we assume that $b>0$. Then, it is easy to see that $x^*(\infty)=b>0$. Next, taking 
	\begin{equation*}
	y=x\chi_{\supp(x)}+b\chi_{\mathbb{N}\setminus\supp(x)}\quad\textnormal{and}\quad{}y_m=x_m\chi_{\supp(x)}+b\chi_{\mathbb{N}\setminus\supp(x)}
	\end{equation*}
	for all $m\in\mathbb{N}$, we may show that $x^*=y^*$ and $x_m^*=y_m^*$ for sufficiently large $m\in\mathbb{N}$. Next, passing to subsequence and relabeling if necessary, it is enough to prove that $y_m^*\rightarrow{y^*}$ on $\mathbb{N}$. Clearly, by definition of $y$ and $y_m$ for all $m\in\mathbb{N}$ we may observe that $y_m-b$ converges $y-b$ globally in measure and $(y-b)^*(\infty)=0$. Finally, using analogous technique as previously, in case $3$ for $b=0$, we finish the proof.	
\end{proof}

\begin{remark}\label{property:sequence}
	Let us notice that using analogous techniques as in the proof of the property $9^0$ in \cite{KPS} and by the property $7^0$ in \cite{KPS} (see pp. 64-65), in view of Theorem 2.7 and Proposition 3.3 in \cite{BS} we are able to show the below assertion.\\
	For any two sequences $x$ and $y$ with $x^*(\infty)=0$ and $y^*(\infty)=0$ the following conditions are equivalent.
	\begin{itemize}
		\item[$(i)$] For any $i\in\mathbb{N}$, $$(x+y)^*(i)={x^*(i)}+{y^*(i)}.$$
		\item[$(ii)$] $\sg(x(i))=\sg(y(i))$ for any $i\in\mathbb{N}$ and there exists $(E_n)_{n\in\mathbb{N}}$ a countable collection of sets such that for every $n\in\mathbb{N}$ we have $\card(E_n)=n$ and    
		$$x^{**}(n)=\frac{1}{n}\sum_{i\in{E_n}}|x(i)|\quad\textnormal{ and }\quad y^{**}(n)=\frac{1}{n}\sum_{i\in{E_n}}|y(i)|.$$
	\end{itemize} 
\end{remark}

\section{geometric structure of sequence lorentz spaces $\gamma_{p,w}$}

In this section, we discuss complete criteria for order continuity, the Fatou property, strict monotonicity and strict convexity and also extreme points of the unit ball in the sequence Lorentz space $\gamma_{p,w}$. 

\begin{theorem}\label{thm:OC:Lorentz}
	Let $w$ be a nonnegative weight sequence and $0<p<\infty$. The Lorentz space $\gamma_{p,w}$ is order continuous if and only if $W(\infty)=\infty$.
\end{theorem}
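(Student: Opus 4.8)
The plan is to characterize order continuity of $\gamma_{p,w}$ by analyzing when the unit vectors $e_i$ and the constant-tail sequences fail to be points of order continuity. Recall $\|x\|_{\gamma_{p,w}}=\left(\sum_{n=1}^\infty(x^{**}(n))^pw(n)\right)^{1/p}$ and that the fundamental sequence is $\phi_{\gamma_{p,w}}(n)=(W(n)+W_p(n))^{1/p}$, where $W(n)=\sum_{i=1}^nw(i)$ and $W_p(n)=n^p\sum_{i>n}w(i)/i^p$. Since $\gamma_{p,w}$ is a symmetric quasi-Banach sequence space, a convenient sufficient criterion for order continuity is that every element is a point of order continuity, and for symmetric sequence spaces this reduces to checking that $\chi_{\mathbb N}$ (or, more precisely, that no sequence with $x^*(\infty)>0$ lies in the space) together with a tail-vanishing condition holds.

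For the forward implication I would argue contrapositively: suppose $W(\infty)<\infty$. I first show $\chi_{\mathbb N}\in\gamma_{p,w}$. Indeed, if $x=\chi_{\mathbb N}$ then $x^{**}(n)=1$ for all $n$, so $\|x\|_{\gamma_{p,w}}^p=\sum_{n=1}^\infty w(n)=W(\infty)<\infty$, and one checks $W_p$-type tails cause no trouble since here the relevant sum is just $\sum w(n)$; more carefully, $\|\chi_{\{1,\dots,n\}}\|^p=W(n)+W_p(n)$ and both terms stay bounded as $n\to\infty$ when $W(\infty)<\infty$ (for $W_p(n)$ one uses that $n^p\sum_{i>n}w(i)/i^p\le\sum_{i>n}w(i)\to0$). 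Now take $x_n=\chi_{\{i\in\mathbb N:\,i>n\}}$. Then $x_n\le\chi_{\mathbb N}$ and $x_n\to0$ pointwise, but $x_n^{**}(k)$ does not go to $0$ appropriately: in fact $x_n\sim\chi_{\mathbb N}$ up to a shift has the same decreasing rearrangement $\chi_{\mathbb N}$, so $\|x_n\|_{\gamma_{p,w}}=\|\chi_{\mathbb N}\|_{\gamma_{p,w}}\not\to0$. Hence $\chi_{\mathbb N}$ is not a point of order continuity and $\gamma_{p,w}\notin(OC)$.

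For the reverse implication, assume $W(\infty)=\infty$. I want to show every $x\in\gamma_{p,w}$ is a point of order continuity. Let $(x_k)\subset\gamma_{p,w}^+$ with $x_k\le x$ and $x_k\to0$ pointwise; I must show $\|x_k\|_{\gamma_{p,w}}\to0$. The key facts are: (a) since $x\in\gamma_{p,w}$ and $W(\infty)=\infty$, one has $x^*(\infty)=0$ — otherwise $x^{**}(n)\ge x^*(\infty)=:c>0$ for all $n$ and $\|x\|^p\ge c^p\sum w(n)=\infty$; (b) pointwise convergence $x_k\to0$ with the domination $x_k\le x$ forces $x_k\to0$ globally in measure, so by Lemma~\ref{lem:properties}, $x_k^*\to0$ pointwise, hence $x_k^{**}(n)\to0$ for each fixed $n$. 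Then I split the norm: $\|x_k\|_{\gamma_{p,w}}^p=\sum_{n\le N}(x_k^{**}(n))^pw(n)+\sum_{n>N}(x_k^{**}(n))^pw(n)$. The tail is controlled uniformly in $k$ by the tail of $\|x\|^p$ since $x_k^{**}\le x^{**}$, and for the head, $x_k^{**}(n)\to0$ for each $n\le N$ handles it as $k\to\infty$ after $N$ is fixed large. This gives $\limsup_k\|x_k\|_{\gamma_{p,w}}^p\le\sum_{n>N}(x^{**}(n))^pw(n)$, which tends to $0$ as $N\to\infty$; thus $\|x_k\|_{\gamma_{p,w}}\to0$.

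The main obstacle I anticipate is the passage from ``$x_k\to0$ pointwise and $x_k\le x\in\gamma_{p,w}$'' to ``$x_k\to0$ globally in measure,'' which is genuinely needed to invoke Lemma~\ref{lem:properties}. On $\mathbb N$ with counting measure, global convergence in measure means $\operatorname{card}\{n:x_k(n)>\varepsilon\}\to0$ for every $\varepsilon>0$, i.e.\ is eventually $0$; this does \emph{not} follow from pointwise convergence alone in general, but here the domination $x_k(n)\le x(n)$ together with $x^*(\infty)=0$ (so $\{n:x(n)>\varepsilon\}$ is finite) confines all the ``large'' coordinates of every $x_k$ to one fixed finite set, on which pointwise convergence does give eventual smallness. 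Making this step precise — and separately confirming that the quasi-norm tail estimate $\sum_{n>N}(x^{**}(n))^pw(n)\to0$ is legitimate, which is just the fact that $\|x\|_{\gamma_{p,w}}^p<\infty$ is a convergent series — is the technical heart of the argument; everything else is bookkeeping with $x^{**}$ and the explicit formula for $\phi_{\gamma_{p,w}}$.
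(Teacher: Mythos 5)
Your proposal is correct and follows essentially the same route as the paper's proof: the converse direction uses the same counterexample $\chi_{\{i\geq m\}}\leq\chi_{\mathbb N}$ with unchanged decreasing rearrangement, and the forward direction uses the same chain (domination plus $x^*(\infty)=0$ forces global convergence in measure, Lemma~\ref{lem:properties} gives $x_k^*\to 0$ pointwise, and a dominated-convergence/head--tail estimate finishes). The step you flag as the technical heart --- confining the large coordinates to the fixed finite set $\{n:x(n)>\varepsilon\}$ --- is exactly how the paper handles it as well.
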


\begin{proof}
	\textit{Necessity.} Suppose that $\gamma_{p,w}$ is not order continuous. Then, there exists $(x_m)\subset{\gamma_{p,w}^+}\setminus\{0\}$ such that $x_m\downarrow{0}$ pointwise and $d=\inf_{n\in\mathbb{N}}\norm{x_m}{\gamma_{p,w}}{}>0$. Next, passing to subsequence and relabeling if necessary we may assume that $\norm{x_m}{\gamma_{p,w}}{}\downarrow{d}$. Since $W(\infty)=\infty$ we claim that $d_{x}(\lambda)<\infty$ for all $\lambda>0$ and $x\in\gamma_{p,w}$. Indeed, assuming for a contrary that there is $x\in\gamma_{p,w}$ such that $x^*(\infty)=\lim_{n\rightarrow\infty}x^*(n)>0$ we obtain $\ell^\infty\hookrightarrow{\gamma_{p,w}}$. Define $z=\chi_{\mathbb{N}}$. Then, we have $z^{**}=z\in\gamma_{p,w}$ and also $\norm{z}{\gamma_{p,w}}{}=W(\infty)=\infty$, which gives us a contradiction and proves the claim. Let $\epsilon>0$. Define two sets
	\begin{equation*}
	E_1=\{n\in\mathbb{N}:x_1(n)>\epsilon\}\qquad\textnormal{and}\qquad{E_2=\mathbb{N}}\setminus{E_1}.
	\end{equation*}
	Now, since $x_1^*(\infty)=0$ it is easy to notice $\card(E_1)=d_{x_1}(\epsilon)<\infty$ and $E_1\cap{E_2}=\emptyset$. Therefore, since $x_m\downarrow{0}$ pointwise we have
	\begin{equation*}
	d_{x_m}(\epsilon)=\card\{n\in\mathbb{N}:x_m(n)>\epsilon\}\rightarrow{0}\quad\textnormal{as}\quad m\rightarrow\infty.
	\end{equation*}
	Hence, by Lemma \ref{lem:properties} it follows that $x_m^*\rightarrow{0}$ pointwise on $\mathbb{N}$. Consequently, since $\norm{x_1}{\gamma_{p,w}}{}<\infty$ and $x^{**}(n)<\infty$ for all $n\in\mathbb{N}$, applying twice the Lebesgue Dominated Convergence Theorem we conclude $\norm{x_m}{\gamma_{p,w}}{}\rightarrow{0}$.	\\
	\textit{Sufficiency.}
	Assume for a contrary that $W(\infty)<\infty$. Then, it is easy to see that $x=\chi_{\mathbb{N}}\in\gamma_{p,w}$, $x^{**}=x$ and $\norm{x}{\gamma_{p,w}}{}=W(\infty)$. Define $x_m=\chi_{\{i\in\mathbb{N}:i\geq m\}}$ for any $m\in\mathbb{N}$. Clearly, we have $x_m\downarrow{0}$ and $x_m\leq{x}$ pointwise for every $m\in\mathbb{N}$. Moreover, we observe that $x_m^{**}=x^{**}$ for any $m\in\mathbb{N}$. Hence, we get $\norm{x_m}{\gamma_{p,w}}{}=W(\infty)>0$ for all $n\in\mathbb{N}$, which contradicts with assumption that $\gamma_{p,w}$ is order continuous.
\end{proof}

\begin{remark}\label{rem:embedding}
	First, let us observe that for any sequence symmetric space $E$, Proposition 5.9 in \cite{BS} is true. Namely, using analogous technique as in \cite{BS} we clearly get the embedding $E\hookrightarrow{m_\phi}$ holds with constant $1$, i.e. for all $x\in{E}$,
	\begin{equation*}
	\norm{x}{m_\phi}{}=\sup\{x^{**}(n)\phi_E(n)\}\leq\norm{x}{E}{},
	\end{equation*}
	where $\phi_E$ is the fundamental sequence of a symmetric space $E$ on $\mathbb{N}$. Next, in view of Remark 3.2 in \cite{Cies-FR} and assuming that $E$ has the Fatou property, we may show that $\phi_E(\infty)=\infty$ if and only if $x^*(\infty)=0$ for any $x\in{E}$.
\end{remark}

\begin{lemma}\label{lem:Fatou:Lorentz}
	Let $w$ be a nonnegative weight sequence and $0<p<\infty$. The Lorentz space $\gamma_{p,w}$ has the Fatou Property.
\end{lemma}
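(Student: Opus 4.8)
The plan is to deduce the Fatou property from the monotone convergence theorem for series, the single nonroutine point being that the decreasing rearrangement respects monotone pointwise limits. So I would fix $(x_n)\subset\gamma_{p,w}^+$ with $M:=\sup_n\norm{x_n}{\gamma_{p,w}}{}<\infty$ and $x_n\uparrow x$ pointwise on $\mathbb{N}$ for some $x\in\ell^0$, and aim to show $x\in\gamma_{p,w}$ together with $\norm{x_n}{\gamma_{p,w}}{}\uparrow\norm{x}{\gamma_{p,w}}{}$. Note that Lemma~\ref{lem:properties} is not directly applicable here: for the counting measure, monotone pointwise convergence does not force global convergence in measure, so the argument must go through distribution functions by hand.

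The first and crucial step is to prove $x_n^{*}\uparrow x^{*}$ pointwise on $\mathbb{N}$. For each fixed $\lambda\geq0$ the sets $A_n(\lambda)=\{k\in\mathbb{N}:x_n(k)>\lambda\}$ increase with $n$ and satisfy $\bigcup_n A_n(\lambda)=\{k\in\mathbb{N}:x(k)>\lambda\}$ (using $x_n(k)\uparrow x(k)$ for each $k$), hence $d_{x_n}(\lambda)\uparrow d_x(\lambda)$ in $\mathbb{N}\cup\{\infty\}$. Since each $d_{x_n}$ is nonincreasing and $d_{x_n}\leq d_{x_{n+1}}\leq d_x$, the definition of $x^{*}$ gives $x_n^{*}\leq x_{n+1}^{*}\leq x^{*}$, so for each $m$ the increasing sequence $(x_n^{*}(m))_n$ has a limit $L_m\leq x^{*}(m)$. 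If $L_m<x^{*}(m)$ I would pick $\lambda\in(L_m,x^{*}(m))$; then $x_n^{*}(m)<\lambda$ forces $d_{x_n}(\lambda)\leq m-1$ for every $n$ (by monotonicity of $d_{x_n}$), and letting $n\to\infty$ gives $d_x(\lambda)\leq m-1$, i.e.\ $x^{*}(m)\leq\lambda$, a contradiction; hence $L_m=x^{*}(m)$.

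With that in hand the rest is bookkeeping. Since $x_n^{**}(m)=\frac1m\sum_{i=1}^m x_n^{*}(i)$ is a finite sum, $x_n^{**}(m)\uparrow x^{**}(m)$ for every $m$; as $t\mapsto t^p$ is nondecreasing on $[0,\infty)$ and $w(i)\geq0$, the nonnegative terms $(x_n^{**}(i))^p w(i)$ increase to $(x^{**}(i))^p w(i)$ for each $i$, and the monotone convergence theorem for series yields
\begin{equation*}
\norm{x_n}{\gamma_{p,w}}{p}=\sum_{i=1}^{\infty}(x_n^{**}(i))^p w(i)\ \uparrow\ \sum_{i=1}^{\infty}(x^{**}(i))^p w(i)
\end{equation*}
as an identity valid in $[0,\infty]$. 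Since the left side never exceeds $M^p$, the right side equals $\lim_n\norm{x_n}{\gamma_{p,w}}{p}\leq M^p<\infty$, so $x\in\gamma_{p,w}$, and taking $p$-th roots gives $\norm{x_n}{\gamma_{p,w}}{}\uparrow\norm{x}{\gamma_{p,w}}{}$, which is the Fatou property. The only obstacle worth flagging lies entirely in the first step: its essence is the continuity from below of the counting set function (the cardinality of an increasing union is the supremum of the cardinalities), valid whether these cardinalities are finite or infinite — precisely the feature missing from the global-in-measure hypothesis of Lemma~\ref{lem:properties} in the purely atomic setting.
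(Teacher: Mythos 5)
Your proof is correct and follows essentially the same route as the paper: establish $x_n^*\uparrow x^*$ and then apply monotone convergence twice to the sums defining the norm. The only difference is that the paper simply cites Proposition 1.7 of Bennett--Sharpley for the pointwise monotone convergence of the rearrangements, whereas you prove that step directly via continuity from below of the distribution functions; your observation that Lemma~\ref{lem:properties} is not the right tool here is also accurate.
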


\begin{proof}
	Let $(x_m)\subset\gamma_{p,w}^+$, $x\in\ell^0$ and $x_m\uparrow{x}$ pointwise and $\sup_{m\in\mathbb{N}}\norm{x_m}{\gamma_{p,w}}{} <\infty$. Immediately, by Proposition 1.7 in \cite{BS} it follows that $x_m^*\uparrow{x^*}$. Next, applying twice Lebesgue Monotone Convergence Theorem \cite{Royd} we get $\norm{x_m}{\gamma_{p,w}}{}\uparrow\norm{x}{\gamma_{p,w}}{}$. Finally, since $\sup_{m\in\mathbb{N}}\norm{x_m}{\gamma_{p,w}}{}<\infty$ it follows that $x\in\gamma_{p,w}$.
\end{proof}

\begin{theorem}\label{thm:SM:Lorentz}
	Let $w$ be a nonnegative weight sequence and $0<p<\infty$. The Lorentz space $\gamma_{p,w}$ is strictly monotone if and only if $W(\infty)=\infty$.
\end{theorem}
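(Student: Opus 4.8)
The plan is to prove both implications directly; the heart of the matter is that a genuine pointwise gap between two comparable nonnegative sequences is not washed out by the decreasing rearrangement but persists as a strict gap in the distributions — hence in the rearrangements themselves — as soon as those rearrangements are known to vanish at infinity, which is precisely what $W(\infty)=\infty$ guarantees. For the necessity I would argue contrapositively: if $W(\infty)<\infty$, then $z:=\chi_{\mathbb{N}}$ satisfies $z^{**}=z$ and $\norm{z}{\gamma_{p,w}}{p}=W(\infty)<\infty$, so $z\in\gamma_{p,w}$; putting $y=z$ and $x=\chi_{\{i\in\mathbb{N}:i\geq 2\}}$ we get $x,y\in\gamma_{p,w}^+$ with $x\leq y$ and $x\neq y$, yet $x^*=y^*=\chi_{\mathbb{N}}$ forces $x^{**}=y^{**}$ and hence $\norm{x}{\gamma_{p,w}}{}=\norm{y}{\gamma_{p,w}}{}$, so $\gamma_{p,w}$ is not strictly monotone. (This is the pair already used in the proof of Theorem~\ref{thm:OC:Lorentz}.)

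For the sufficiency, assume $W(\infty)=\infty$ and take $x,y\in\gamma_{p,w}^+$ with $x\leq y$ and $x\neq y$. The first step is the observation that every $z\in\gamma_{p,w}$ has $z^*(\infty)=0$: otherwise $z^{**}(n)\geq z^*(\infty)>0$ for all $n$ would give $\norm{z}{\gamma_{p,w}}{p}\geq z^*(\infty)^p\,W(\infty)=\infty$, a contradiction. In particular $d_x(\lambda)$ and $d_y(\lambda)$ are finite for every $\lambda>0$. Since $x\leq y$ and $x\neq y$, choose $k_0$ with $x(k_0)<y(k_0)$ and a scalar $\lambda_0>0$ with $x(k_0)\leq\lambda_0<y(k_0)$. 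Then $\{k\in\mathbb{N}:x(k)>\lambda_0\}\subseteq\{k\in\mathbb{N}:y(k)>\lambda_0\}$, with $k_0$ lying only in the second (finite) set, so $d_x(\lambda_0)<d_y(\lambda_0)$; thus $x$ and $y$ are not equimeasurable, i.e.\ $x^*\neq y^*$. Together with the routine monotonicity $x^*\leq y^*$ (immediate from $x\leq y$), this yields an index $n_0\in\mathbb{N}$ with $x^*(n_0)<y^*(n_0)$.

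It remains to transfer this to the norm. For each $n\geq n_0$, averaging the inequalities $x^*(i)\leq y^*(i)$ — which are strict at $i=n_0$ — gives $x^{**}(n)<y^{**}(n)$, where all quantities involved are finite since $x,y\in\gamma_{p,w}$. As $W(\infty)=\infty$, the weight $w$ is strictly positive at indices arbitrarily far out, so fix $n_1\geq n_0$ with $w(n_1)>0$. Writing $\norm{y}{\gamma_{p,w}}{p}-\norm{x}{\gamma_{p,w}}{p}=\sum_{n}\bigl((y^{**}(n))^p-(x^{**}(n))^p\bigr)w(n)$, every summand is nonnegative and the $n_1$-th one is strictly positive, so the difference is strictly positive and $\norm{x}{\gamma_{p,w}}{}<\norm{y}{\gamma_{p,w}}{}$, as desired. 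I expect the only delicate point to be the middle step of the sufficiency — that the single-coordinate strict inequality is genuinely detected by $d_x(\lambda_0)<d_y(\lambda_0)$ and hence by $x^*\neq y^*$ — together with recognizing that this argument relies essentially on the finiteness of the distributions, which is exactly why strict monotonicity fails when $W(\infty)<\infty$; the hypothesis $W(\infty)=\infty$ is used twice, once to force $z^*(\infty)=0$ for all $z\in\gamma_{p,w}$ and once to place a positive weight beyond the index $n_0$.
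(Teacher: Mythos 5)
Your proof is correct, and its overall skeleton matches the paper's: the same counterexample $x=\chi_{\{i\geq 2\}}$, $y=\chi_{\mathbb{N}}$ for the failure of strict monotonicity when $W(\infty)<\infty$, and for sufficiency the same endgame of locating an index where $x^*<y^*$ strictly, propagating this to $x^{**}(n)<y^{**}(n)$ for all large $n$, and then invoking the fact that $W(\infty)=\infty$ forces $w$ to be positive arbitrarily far out. The one place where you genuinely diverge is the key intermediate claim that $x\leq y$, $x\neq y$ (with both rearrangements vanishing at infinity) yields $x^*(m_0)<y^*(m_0)$ for some $m_0$: the paper establishes this by a contradiction argument involving a permutation $\sigma$ realizing $y^*$ on the finite set $N_0=\{n:y(n)>\delta_0\}$ and chasing where the value $y(n_0)$ must be attained by $x$, whereas you observe directly that with $x(k_0)\leq\lambda_0<y(k_0)$ the sets $\{x>\lambda_0\}\subsetneq\{y>\lambda_0\}$ are finite, so $d_x(\lambda_0)<d_y(\lambda_0)$, and since $x^*$ and $x$ are equimeasurable this forces $x^*\neq y^*$; combined with the standard monotonicity $x^*\leq y^*$ this gives the strict inequality at some index. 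Your route is shorter and arguably more transparent, since it isolates exactly where finiteness of the distribution functions (i.e.\ $W(\infty)=\infty$) is used; the paper's permutation argument accomplishes the same thing with more bookkeeping. No gaps.
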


\begin{proof}
	\textit{Necessity.}	Assume for a contrary that $W(\infty)<\infty$. Then, we may show that $\ell^\infty\hookrightarrow\gamma_{p,w}$. Next, defining two sequences 
	\begin{equation*}
	x=\chi_{\{i\in\mathbb{N}:i>1\}}\qquad\textnormal{and}\qquad{y=\chi_{\mathbb{N}}}
	\end{equation*}
	we easily observe that $x\leq{y}$, $x\neq{y}$ and $x^{**}=y^{**}=y$. Consequently, $\norm{x}{\gamma_{p,w}}{}=\norm{y}{\gamma_{p,w}}{}$, which contradicts with assumption that the Lorentz space $\gamma_{p,w}$ is strictly monotone.\\	
	\textit{Sufficiency.} Let $x,y\in\gamma_{p,w}^+$, $x\leq{y}$ and $x\neq{y}$. Since $x\neq{y}$ there exists $n_0\in\mathbb{N}$ such that $x(n_0)<y(n_0)$. 
	Define 
	\begin{equation*}
	\delta_0=\max\{y(n_0)/2,x(n_0)\}\qquad\textnormal{and}\qquad N_0=\{n\in\mathbb{N}:y(n)>\delta_0\}.
	\end{equation*} 
	Since $W(\infty)=\infty$, by the proof of Theorem \ref{thm:OC:Lorentz} it follows that $y^*(\infty)=x^*(\infty)=0$. Hence, since $n_0\in N_0$ we get 
	\begin{equation*}
	0<\card(N_0)=d_y(\delta_0)<\infty.
	\end{equation*}
	Now, we claim that there exists $m_0\in\{1,\dots,\card(N_0)\}$ such that $x^*(m_0)<y^*(m_0)$. Indeed, if it is not true then for all $n\in\{1,\dots,\card(N_0)\}$ we have $x^*(n)=y^*(n)$. Moreover, there is a permutation $\sigma:\mathbb{N}\rightarrow\mathbb{N}$ such that $\sigma(n)\in{N_0}$ and $y^*(n)=y(\sigma(n))$ for every $n\in\{1,\dots,\card(N_0)\}$. So, we have
	\begin{equation*}
	x^*(n)=y^*(n)=y(\sigma(n))\geq{x(\sigma(n))}
	\end{equation*}
	for any $n\in\{1,\dots,\card(N_0)\}$. Let $m_0\in\{1,\dots,\card(N_0)\}$ be such that $\sigma(m_0)=n_0$. Then, we observe that
	\begin{equation*}
	x^*(m_0)=y^*(m_0)=y(\sigma(m_0))=y(n_0)>{x(n_0)}.
	\end{equation*}
	Therefore, we obtain
	\begin{equation*}
	x^*(m_0)>{x(n_0)}=x(\sigma(m_0)),
	\end{equation*}
	which implies that there exists $k_0\in\mathbb{N}\setminus{N_0}$ such that $x(k_0)=y(n_0)$. On the other hand, it is well known that $x(k_0)\leq y(k_0)$, whence
	\begin{equation*}
	y(k_0)\geq x(k_0)=y(n_0)=y^*(m_0).
	\end{equation*}
	In consequence, by definition of $N_0$ this yields that $k_0\in{N_0}$ and gives us a contradiction. Now, since $x^*(n)\leq{y^*}(n)$ for any $n\in\mathbb{N}$ and $x^*(n_0)<y^*(n_0)$ for some $n_0\in\mathbb{N}$ it follows that 
	\begin{equation*}
	x^{**}(n)\leq{y^{**}(n)}\qquad\textnormal{and}\qquad\sum_{i=1}^{k}x^*(i)<\sum_{i=1}^{k}y^*(i)
	\end{equation*}
	for all $n\in\mathbb{N}$ and $k\geq{n_0}$. Finally, by assumption that $W(\infty)=\infty$ there exists $(n_k)\subset\mathbb{N}$ such that $w(n_k)>0$ for every $k\in\mathbb{N}$. Hence, we infer that $\norm{x}{\gamma_{p,w}}{}<\norm{y}{\gamma_{p,w}}{}$.	
\end{proof}

The immediate consequence of the previous theorem and Proposition 2.1 in \cite{KamLeeLew} is the following result.

\begin{corollary}\label{coro:extrem-point}
	Let $w\geq{0}$ be a weight sequence such that $W(\infty)=\infty$ and let $0<p<\infty$. An element $x\in S_{\gamma_{p,w}}$ is an extreme point of $B_{\gamma_{p,w}}$ if and only if $x^*$ is an extreme point of $B_{\gamma_{p,w}}$.
\end{corollary}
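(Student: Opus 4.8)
The plan is to obtain the statement as a direct combination of two facts now at our disposal. First, since $W(\infty)=\infty$, Theorem~\ref{thm:SM:Lorentz} tells us that $\gamma_{p,w}$ is strictly monotone; recall in addition that $\gamma_{p,w}$ is rearrangement invariant with the Fatou property (Lemma~\ref{lem:Fatou:Lorentz}) and that $\|x\|_{\gamma_{p,w}}=\|x^{*}\|_{\gamma_{p,w}}$, so $x\in S_{\gamma_{p,w}}$ precisely when $x^{*}\in S_{\gamma_{p,w}}$. Second, Proposition~2.1 in \cite{KamLeeLew} asserts exactly that in a strictly monotone symmetric sequence space $E$ a point $x\in S_{E}$ is an extreme point of $B_{E}$ if and only if $x^{*}$ is. Hence the corollary will follow by applying that proposition with $E=\gamma_{p,w}$, the only verification being that its hypotheses hold, which the first paragraph supplies.

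For the reader's orientation I would also indicate the mechanism behind Proposition~2.1 and where strict monotonicity enters. The implication ``$x$ extreme $\Rightarrow x^{*}$ extreme'' is the soft one and needs no monotonicity: given $x^{*}=\tfrac12(u+v)$ with $u,v\in S_{\gamma_{p,w}}$ and $u\neq v$, pick a bijection $\sigma$ of $\mathbb{N}$ arranging $|x|$ decreasingly and transport $u,v$ back along $\sigma$ (keeping on $\supp x$ the signs of $x$, and assigning opposite signs on $\mathbb{N}\setminus\supp x$) to produce $y\neq z$ in $S_{\gamma_{p,w}}$ with $x=\tfrac12(y+z)$. For the converse, assume $x^{*}$ is extreme and let $y,z\in S_{\gamma_{p,w}}$ with $x=\tfrac12(y+z)$. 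Here strict monotonicity is invoked twice: first, from $|x|\leq\tfrac12(|y|+|z|)$ and $\|x\|_{\gamma_{p,w}}=1$ it yields the pointwise identity $|x|=\tfrac12(|y|+|z|)$ (a short triangle-inequality argument in the Banach range $p\geq1$, and the analogous argument of \cite{KamLeeLew} in general), whence $y(i)$ and $z(i)$ share the sign of $x(i)$; second, after passing to decreasing rearrangements and using the Hardy--Littlewood--P\'olya averaging inequality to get the submajorization $x^{*}\prec\!\prec\tfrac12(y^{*}+z^{*})$ with equal $\gamma_{p,w}$-norms, it upgrades this to the identity $x^{*}=\tfrac12(y^{*}+z^{*})$. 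Extremality of $x^{*}$ then gives $x^{*}=y^{*}=z^{*}$, and since $W(\infty)=\infty$ forces $x^{*}(\infty)=0$ (as in the proof of Theorem~\ref{thm:OC:Lorentz}), a level-by-level argument over the distinct values of $|x|$, in decreasing order, combining $2x=y+z$, the sign information, and equimeasurability, yields $y=z=x$.

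The step I expect to be the main obstacle is the rigidity claim ``$x^{*}\prec\!\prec\tfrac12(y^{*}+z^{*})$ with equal norms $\Rightarrow x^{*}=\tfrac12(y^{*}+z^{*})$'': submajorization is genuinely weaker than the pointwise order, so strict monotonicity cannot be applied naively, and one must propagate the equality from the set $\{n:w(n)>0\}$ (unbounded because $W(\infty)=\infty$) to all of $\mathbb{N}$ using the specific form of the Lorentz norm and the continuity and monotonicity of the maximal sequences $x^{**},y^{**},z^{**}$. Since this rigidity step and the concluding level-by-level bookkeeping are precisely what is carried out in the proof of Proposition~2.1 in \cite{KamLeeLew}, in the present paper it suffices to cite that proposition and combine it with Theorem~\ref{thm:SM:Lorentz}.
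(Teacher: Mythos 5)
Your proposal is correct and takes essentially the same route as the paper: the corollary is obtained by combining Theorem~\ref{thm:SM:Lorentz} (which gives strict monotonicity of $\gamma_{p,w}$ under $W(\infty)=\infty$) with Proposition~2.1 of \cite{KamLeeLew} applied to the symmetric space $E=\gamma_{p,w}$. Your additional sketch of the mechanism behind that proposition is a reasonable gloss, but the paper, like you, simply cites it.
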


Next, we show that the Lorentz space $\gamma_{p,w}$ is strictly convex for $1<p<\infty$ and $w$ a positive weight sequence such that $W(\infty)=\infty$. In some parts of the proof of the following theorem we use the similar techniques to Theorem 3.1 in \cite{CiesKamPluc} (see also Theorem 2.3 in \cite{CKKP}). For the sake of completeness and reader's convenience we show all details of the proof.

\begin{theorem}
	Let $w$ be a nonnegative weight sequence. The Lorentz space $\gamma_{p,w}$ is strictly convex if and only if $1<p<\infty$ and $w(n)>0$ for any $n\in\mathbb{N}$ and also $W(\infty)=\infty$.  
\end{theorem}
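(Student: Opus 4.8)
The plan is to prove the two implications separately. For necessity I will, in each of the three excluded situations, exhibit two distinct unit vectors whose midpoint also lies on $S_{\gamma_{p,w}}$; for sufficiency I will run a chain of inequalities (triangle inequality, Minkowski in $\ell^{p}(w)$, strict convexity of $t\mapsto t^{p}$) that is forced to be an equality and then read off $x=y$ with the help of Remark~\ref{property:sequence}. Assume first $\gamma_{p,w}$ is strictly convex. If $W(\infty)<\infty$, then $\chi_{\mathbb{N}}\in\gamma_{p,w}$ and, as in the proof of Theorem~\ref{thm:SM:Lorentz}, $x=\chi_{\{i\in\mathbb{N}:i>1\}}$ and $y=\chi_{\mathbb{N}}$ satisfy $x^{**}=y^{**}=((x+y)/2)^{**}=\chi_{\mathbb{N}}$, so $\|x\|_{\gamma_{p,w}}=\|y\|_{\gamma_{p,w}}=\|(x+y)/2\|_{\gamma_{p,w}}>0$ while $x,y$ are not proportional; normalising gives two distinct unit vectors with midpoint on $S_{\gamma_{p,w}}$, a contradiction. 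If $p=1$, a Fubini computation gives $\|x\|_{\gamma_{1,w}}=\sum_{i}x^{*}(i)\widetilde w(i)$ with $\widetilde w(i)=\sum_{n\ge i}w(n)/n$, hence on the cone $\{ae_{1}+be_{2}:a\ge b\ge 0\}$ one has $\|ae_{1}+be_{2}\|_{\gamma_{1,w}}=a\widetilde w(1)+b\widetilde w(2)$; since $\widetilde w(1)>0$, every positive level set of this affine functional meets the cone in a nondegenerate segment on which the norm is affine, so $S_{\gamma_{1,w}}$ is not strictly convex. (For $0<p<1$ the functional $\|\cdot\|_{\gamma_{p,w}}$ is in general not subadditive, so the only value of $p$ at issue besides $1<p<\infty$ is $p=1$.)

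It remains, for necessity, to treat the case $W(\infty)=\infty$, $1<p<\infty$, $w(n_{0})=0$ for some $n_{0}$. Take a finitely supported decreasing $x=x^{*}\ge 0$ whose values strictly decrease at the indices $n_{0}-1,n_{0},n_{0}+1,n_{0}+2$ (omitting $n_{0}-1$ when $n_{0}=1$), e.g.\ $x^{*}(i)=\max\{n_{0}+2-i,0\}$. For small $\delta>0$ let $y_{\pm}$ be obtained from $x$ by changing only its values at positions $n_{0}$ and $n_{0}+1$ to $x^{*}(n_{0})\pm\delta$ and $x^{*}(n_{0}+1)\mp\delta$. Then $y_{\pm}$ are still decreasing, the partial sums of $y_{\pm}^{*}$ agree with those of $x^{*}$ except at index $n_{0}$, whence $y_{\pm}^{**}(n)=x^{**}(n)$ for $n\ne n_{0}$ and $y_{\pm}^{**}(n_{0})=x^{**}(n_{0})\pm\delta/n_{0}$. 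Since $w(n_{0})=0$, this yields $\|y_{+}\|_{\gamma_{p,w}}=\|y_{-}\|_{\gamma_{p,w}}=\|x\|_{\gamma_{p,w}}$, while $(y_{+}+y_{-})/2=x$ and $y_{+}\ne y_{-}$; normalising again contradicts strict convexity.

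For sufficiency, let $1<p<\infty$, $w(n)>0$ for all $n$, and $W(\infty)=\infty$; by the proof of Theorem~\ref{thm:OC:Lorentz} every $z\in\gamma_{p,w}$ has $z^{*}(\infty)=0$. Let $x,y\in S_{\gamma_{p,w}}$ with $\|(x+y)/2\|_{\gamma_{p,w}}=1$, i.e.\ $\|x+y\|_{\gamma_{p,w}}=\|x\|_{\gamma_{p,w}}+\|y\|_{\gamma_{p,w}}$. Using subadditivity of $z\mapsto z^{**}$ and then Minkowski's inequality in $\ell^{p}(w)$,
\[
\|x+y\|_{\gamma_{p,w}}=\Bigl(\sum_{n}\bigl((x+y)^{**}(n)\bigr)^{p}w(n)\Bigr)^{1/p}\le\Bigl(\sum_{n}\bigl(x^{**}(n)+y^{**}(n)\bigr)^{p}w(n)\Bigr)^{1/p}\le\|x\|_{\gamma_{p,w}}+\|y\|_{\gamma_{p,w}},
\]
so both inequalities are equalities. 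Equality in Minkowski's inequality, valid since $1<p<\infty$, forces $(x^{**}(n)w(n)^{1/p})_{n}$ and $(y^{**}(n)w(n)^{1/p})_{n}$ to be proportional; as $w(n)>0$ and $\|x\|_{\gamma_{p,w}}=\|y\|_{\gamma_{p,w}}=1$, this gives $x^{**}=y^{**}$, hence $x^{*}=y^{*}$. Equality in the first inequality, together with strict monotonicity and injectivity of $t\mapsto t^{p}$ on $[0,\infty)$ and $w(n)>0$, gives $(x+y)^{**}(n)=x^{**}(n)+y^{**}(n)$ for every $n$, and differencing partial sums yields $(x+y)^{*}(n)=x^{*}(n)+y^{*}(n)$ for every $n$.

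Now Remark~\ref{property:sequence} applies (since $x^{*}(\infty)=y^{*}(\infty)=0$): $\sg(x(i))=\sg(y(i))$ for all $i$, and there is a family $(E_{n})$ with $\card(E_{n})=n$ and $x^{**}(n)=\tfrac1n\sum_{i\in E_{n}}|x(i)|$, $y^{**}(n)=\tfrac1n\sum_{i\in E_{n}}|y(i)|$. Choosing the $E_{n}$ increasing, one obtains an enumeration $(i_{n})$ of $\bigcup_{n}E_{n}$ with $|x(i_{n})|=x^{*}(n)=y^{*}(n)=|y(i_{n})|$ for all $n$, while $x$ and $y$ vanish off $\bigcup_{n}E_{n}$; combined with equality of signs this forces $x=y$, so $\gamma_{p,w}$ is strictly convex. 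The step I expect to be most delicate is precisely this last one: turning the conclusion of Remark~\ref{property:sequence} into $x=y$ requires selecting the common maximal sets $E_{n}$ to be nested and handling ties among the values of $x^{*}$ (so that a common rearranging enumeration of $\supp x=\supp y$ exists); a secondary technical point is checking, in the $w(n_{0})=0$ case, that the perturbations $y_{\pm}$ stay decreasing and alter the maximal sequence only at the single index $n_{0}$.
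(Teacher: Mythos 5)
Your proof is correct and follows the same overall skeleton as the paper's: explicit midpoint counterexamples for necessity, and for sufficiency an equality analysis that reduces to $(x+y)^{*}=x^{*}+y^{*}$ and then invokes Remark~\ref{property:sequence}. The differences are in the details and are worth recording. For $W(\infty)<\infty$ you reuse the pair from Theorem~\ref{thm:SM:Lorentz} where the paper uses $\chi_{\{2n\}}$ versus $\chi_{\mathbb{N}}$ (equivalent). For $p=1$ you pass to the isometric copy $d_{1,v}$ of Remark~\ref{rem:isometry:lorentz} and exhibit a flat face of the ball on the two-dimensional cone $\{ae_1+be_2: a\ge b\ge 0\}$, whereas the paper argues that the norm is additive on the cone of decreasing sequences; both work. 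For $w(n_0)=0$ your single construction, perturbing $x^*(i)=\max\{n_0+2-i,0\}$ at positions $n_0$ and $n_0+1$, handles in one stroke what the paper does in two separate computations for $n_0=1$ and $n_0>1$, and is arguably cleaner. In the sufficiency part, your appeal to the equality case of Minkowski's inequality to force $x^{**}=y^{**}$ is exactly the content of the paper's Case 1/Case 2 split (strict convexity of $t\mapsto t^p$), so this is the same argument in different packaging. The one delicate point you flag yourself --- that the common maximal sets $E_n$ furnished by Remark~\ref{property:sequence} can be taken nested, so that differencing gives $|x(i_n)|=x^*(n)=y^*(n)=|y(i_n)|$ and hence $|x|=|y|$ --- is passed over just as quickly in the paper, which simply asserts an increasing family; you are not missing anything the paper supplies there. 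Likewise, your parenthetical dismissal of $0<p<1$ matches the paper, whose definition of strict convexity is only posed for Banach spaces.
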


\begin{proof}
	\textit{Necessity.} Assume that $\gamma_{p,w}$ is strictly convex. For a contrary we suppose that $p=1$. Let $x,y\in S_{\gamma_{p,w}}$ and $\norm{x+y}{\gamma_{p,w}}{}=2$. Without loss of generality we may assume that $x=x^*$ and $y=y^*$. Then, we have $(x+y)^{**}=x^{**}+y^{**}$ and also
	$$\norm{x+y}{\gamma_{p,w}}{}=\norm{x}{\gamma_{p,w}}{}+\norm{y}{\gamma_{p,w}}{}=2.$$
	Consequently, since $x$ and $y$ are arbitrary and $\gamma_{p,w}$ is strictly convex we conclude a contradiction. Now, assume that $W(\infty)<\infty$. Define 
	\begin{equation*}
	x=\frac{1}{W(\infty)^{1/p}}\chi_{\{2n:n\in\mathbb{N}\}}\quad\textnormal{ and }\quad{y}=\frac{1}{W(\infty)^{1/p}}\chi_{\mathbb{N}}.
	\end{equation*}
	Clearly, we have for any $n\in\mathbb{N}$,
	\begin{equation*}
	x^{**}(n)=y^{**}(n)=\frac{1}{W(\infty)^{1/p}}.
	\end{equation*}
	Moreover, we observe that 
	\begin{equation*}
	(x+y)^{**}(n)=\frac{1}{W(\infty)^{1/p}}\left(2\chi_{\{2n:n\in\mathbb{N}\}}+\chi_{\{2n-1:n\in\mathbb{N}\}}\right)^{**}(n)=\frac{2}{W(\infty)^{1/p}}
	\end{equation*}
	for any $n\in\mathbb{N}$. Hence, we get
	\begin{equation*}
	\norm{x}{\gamma_{p,w}}{}=\norm{y}{\gamma_{p,w}}{}=\frac{\norm{x+y}{\gamma_{p,w}}{}}{2}=1.
	\end{equation*}
	Therefore, by assumption that $\gamma_{p,w}$ is strictly convex we obtain a contradiction. Now, let us suppose for a contrary that there is $n_0\in\mathbb{N}$ such that $w(n_0)=0$. If $n_0=1$, then take $\epsilon\in(0,1/\phi_{\gamma_{p,w}}(2))$ and define 
	\begin{equation*}
	x=\frac{1}{\phi_{\gamma_{p,w}}(2)}\chi_{\{1,2\}}\qquad\textnormal{and}\qquad{}y=\left(\frac{1}{\phi_{\gamma_{p,w}}(2)}+\epsilon\right)\chi_{\{1\}}+\left(\frac{1}{\phi_{\gamma_{p,w}}(2)}-\epsilon\right)\chi_{\{2\}}.
	\end{equation*}
	It is easy to see that $x\neq{y}$ and
	\begin{equation*}
	x^{**}(n)=\frac{1}{\phi_{\gamma_{p,w}}(2)}\chi_{\{1,2\}}(n)+\frac{2}{n\phi_{\gamma_{p,w}}(2)}\chi_{\mathbb{N}\setminus\{1,2\}}(n)
	\end{equation*}
	and also
	\begin{equation*}
	y^{**}(n)=\left(\frac{1}{\phi_{\gamma_{p,w}}(2)}+\epsilon\right)\chi_{\{1\}}(n)+\frac{1}{\phi_{\gamma_{p,w}}(2)}\chi_{\{2\}}(n)+\frac{2}{n\phi_{\gamma_{p,w}}(2)}\chi_{\mathbb{N}\setminus\{1,2\}}(n).
	\end{equation*}
	Therefore, since $w(1)=0$, we have
	\begin{align*}
	\norm{x}{\gamma_{p,w}}{}=&\norm{y}{\gamma_{p,w}}{}=\left(\frac{1}{(\phi_{\gamma_{p,w}}(2))^p}w(2)+\frac{2^p}{(\phi_{\gamma_{p,w}}(2))^p}\sum_{n=3}^{\infty}\frac{w(n)}{n^p}\right)^{1/p}\\
	=&\frac{1}{\phi_{\gamma_{p,w}}(2)}\left(W(2)+W_p(2)\right)^{1/p}=1.
	\end{align*}
	Furthermore, we observe that  
	\begin{align*}
	(x+y)^{**}(n)=&\left(\left(\frac{2}{\phi_{\gamma_{p,w}}(2)}+\epsilon\right)\chi_{\{1\}}+\left(\frac{2}{\phi_{\gamma_{p,w}}(2)}-\epsilon\right)\chi_{\{2\}}\right)^{**}(n)\\
	=&\left(\frac{2}{\phi_{\gamma_{p,w}}(2)}+\epsilon\right)\chi_{\{1\}}(n)+\frac{4}{n\phi_{\gamma_{p,w}}(2)}\chi_{\mathbb{N}\setminus\{1\}}(n).
	\end{align*}
	Hence, since $w(1)=0$, we get
	\begin{align*}
	\norm{x+y}{\gamma_{p,w}}{}=&\left(\frac{4^p}{(\phi_{\gamma_{p,w}}(2))^p}\sum_{n=2}^{\infty}\frac{w(n)}{n^p}\right)^{1/p}=\frac{2}{\phi_{\gamma_{p,w}}(2)}\left(w(2)+2^p\sum_{n=3}^{\infty}\frac{w(n)}{n^p}\right)^{1/p}\\
	=&\frac{2}{\phi_{\gamma_{p,w}}(2)}\left(W(2)+W_p(2)\right)^{1/p}=2.	
	\end{align*}
	So, in case when $w(1)=0$, it follows that $\gamma_{p,w}$ is not strictly convex. Assume that $n_0>1$. Define
	\begin{equation*}
	x=\frac{1}{\phi_{\gamma_{p,w}}(n_0)}\chi_{[1,n_0]}\qquad\textnormal{and}\qquad{y}=\frac{1}{\phi_{\gamma_{p,w}}(n_0)}\left(\chi_{[1,n_0-1]}+\frac{1}{2}\chi_{\{n_0,n_0+1\}}\right).
	\end{equation*}
	Then, we easily observe that $x\neq y$ and $\norm{x}{\gamma_{p,w}}{}=1$. Moreover, we have
	\begin{equation*}
	y^{**}(n)=\frac{1}{\phi_{\gamma_{p,w}}(n_0)}
	\begin{cases}
	1&\textnormal{if }n<n_0,\\
	\frac{n_0-1/2}{n_0}&\textnormal{if } n=n_0,\\
	\frac{n_0}{n}&\textnormal{if } n>n_0,
	\end{cases}
	\end{equation*}
	and 
	\begin{align*}
	(x+y)^{**}(n)&=\frac{1}{\phi_{\gamma_{p,w}}(n_0)}\left(2\chi_{[1,n_0-1]}+\frac{3}{2}\chi_{\{n_0\}}+\frac{1}{2}\chi_{\{n_0+1\}}\right)^{**}(n)\\
	&=\frac{2}{\phi_{\gamma_{p,w}}(n_0)}
	\begin{cases}
	1&\textnormal{if }n<n_0,\\
	\frac{n_0-1/4}{n_0}&\textnormal{if } n=n_0,\\
	\frac{n_0}{n}&\textnormal{if } n>n_0.
	\end{cases}
	\end{align*}
	Hence, since $w(n_0)=0$, we conclude that
	\begin{equation*}
	\norm{y}{\gamma_{p,w}}{}=\frac{\norm{x+y}{\gamma_{p,w}}{}}{2}=\frac{1}{\phi_{\gamma_{p,w}}(n_0)}\left(W(n_0-1)+W_p(n_0)\right)^{1/p}=1.
	\end{equation*}
	In consequence, by assumption that $\gamma_{p,w}$ is strictly convex we get a contradiction.\\	 
	\textit{Sufficiency.} Let $x,y\in{S}_{\gamma_{p,w}}$ and $x\neq{y}$. We consider the proof in two cases.\\
	\textit{Case $1$.} Assume that there exists $n_0\in\mathbb{N}$ such that $x^{**}(n_0)\neq{y}^{**}(n_0)$. Then, by strict convexity of the power function $u^p$ for $1<p<\infty$ we have 
	\begin{equation*}
	\left(\frac{1}{2}x^{**}(n_0)+\frac{1}{2}y^{**}(n_0)\right)^{p}<\frac{1}{2}x^{**p}(n_0)+\frac{1}{2}{y}^{**p}(n_0).
	\end{equation*}
	Therefore, since for any $n\in\mathbb{N}$,
	\begin{equation*}
	\left(\frac{1}{2}x^{**}(n)+\frac{1}{2}y^{**}(n)\right)^{p}\leq\frac{1}{2}x^{**p}(n)+\frac{1}{2}{y}^{**p}(n)
	\end{equation*}
	by assumption that $w(n)>0$ for all $n\in\mathbb{N}$ we infer that $\norm{x+y}{\gamma_{p,w}}{}<2$.\\
	\textit{Case $2$.} Suppose that $x^{**}(n)=y^{**}(n)$ for every $n\in\mathbb{N}$. Thus, we have $x^*(n)=y^*(n)$ for any $n\in\mathbb{N}$. We claim that there exists $n_0\in\mathbb{N}$ such that 
	\begin{equation*}
	(x+y)^{**}(n_0)<x^{**}(n_0)+y^{**}(n_0).
	\end{equation*}
	Indeed, assuming that it is not true it follows that $(x+y)^{*}(n)=x^{*}(n)+y^{*}(n)$ for all $n\in\mathbb{N}$. Consequently, since $W(\infty)=\infty$, by Remark \ref{property:sequence} we obtain $|x+y|(n)=|x(n)|+|y(n)|$ for all $n\in\mathbb{N}$ and there exists $(E_n)$ an increasing sequence of sets such that $\card(E_n)=n$ for every $n\in\mathbb{N}$ and also
	\begin{equation*}
	\sum_{i\in{E_n}}|x(i)|=\sum_{i=1}^n{x^*}=\sum_{i=1}^n{y^*}=\sum_{i\in{E_n}}|y(i)|.
	\end{equation*}
	In consequence, $|x(n)|=|y(n)|$ for any $n\in\mathbb{N}$ and so $x(n)=y(n)$ for every $n\in\mathbb{N}$. Therefore, in view of assumption $x\neq{y}$ we get a contradiction. Finally, applying the triangle inequality for the maximal function we infer that 
	\begin{equation*}
	\norm{\frac{x+y}{2}}{\gamma_{p,w}}{p}<\frac{1}{2}\norm{x}{\gamma_{p,w}}{p}+\frac{1}{2}\norm{y}{\gamma_{p,w}}{p}=1.
	\end{equation*}	
\end{proof}

Finally, we present a complete criteria for an extreme point in the ball of the Lorentz space $\gamma_{1,w}$. It is worth mentioning that in some parts of the proof we use similar technique to the proof of Theorem 2.6 in \cite{KamLeeLew}. For the sake of completeness and reader's convenience we present all details of the proof of the following theorem.

\begin{theorem}\label{thm:extreme}
	Let $w\geq{0}$ be a weight sequence such that $W(\infty)=\infty$. An element $x\in S_{\gamma_{1,w}}$ is an extreme point of $B_{\gamma_{1,w}}$ if and only if there exists $n_0\in\mathbb{N}$ such that 
	\begin{equation}\label{extreme:point}
	x^*=\frac{1}{\phi_{\gamma_{1,w}}(n_0)}\chi_{\{i\in\mathbb{N}:i\leq n_0\}}
	\end{equation}
	and in case when $n_0>1$, $W(n_0-1)>0$.
\end{theorem}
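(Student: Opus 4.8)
The plan is to invoke Corollary~\ref{coro:extrem-point}, which reduces the problem to characterizing the extreme points of $B_{\gamma_{1,w}}$ among decreasing nonnegative elements $x=x^*\in S_{\gamma_{1,w}}$, and then to exploit the elementary form of the norm on monotone sequences. Interchanging the order of summation, every $u\in\gamma_{1,w}$ satisfies $\norm{u}{\gamma_{1,w}}{}=\sum_{i=1}^\infty u^*(i)\,v(i)$, where $v(i)=\sum_{n\geq i}w(n)/n$; here $v$ is decreasing, $v(i)<\infty$ because $W_1(1)<\infty$, $v(i)>0$ for all $i$ because $W(\infty)=\infty$, $\phi_{\gamma_{1,w}}(n_0)=\sum_{i=1}^{n_0}v(i)$, and $\norm{u}{\gamma_{1,w}}{}=\sum_i u(i)v(i)$ whenever $u\geq0$ is decreasing. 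Moreover, by the Hardy--Littlewood inequality $\sum_i u(i)v(i)\leq\sum_i|u(i)|v(i)\leq\sum_i u^*(i)v(i)=\norm{u}{\gamma_{1,w}}{}$ for every $u\in\gamma_{1,w}$, and since $v>0$, equality of the two outer quantities forces $u\geq0$.

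For necessity, let $x=x^*\in S_{\gamma_{1,w}}$ be extreme. If $x$ took two distinct positive values, let $B_1,B_2$ be its two highest level sets, both finite (an infinite level set at positive height would make $\norm{x}{\gamma_{1,w}}{}\geq c\sum_i v(i)=cW(\infty)=\infty$). With $h=(\sum_{i\in B_2}v(i))\chi_{B_1}-(\sum_{i\in B_1}v(i))\chi_{B_2}$ we have $h\neq0$ and $\sum_i h(i)v(i)=0$, and for small $\epsilon>0$ both $x\pm\epsilon h$ are nonnegative and decreasing (the strict drops between consecutive level sets absorb the perturbation), so each has norm $\sum_i x(i)v(i)\pm\epsilon\sum_i h(i)v(i)=1$; as $x$ is their average and $h\neq0$, this contradicts extremality. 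Hence $x=c\,\chi_{\{i\leq n_0\}}$ with $n_0\in\mathbb{N}$ (the support must be finite, again because $W(\infty)=\infty$) and $c=1/\phi_{\gamma_{1,w}}(n_0)$. Finally, if $n_0>1$ and $W(n_0-1)=0$ then $w(1)=\dots=w(n_0-1)=0$, so $v$ is constant on $\{1,\dots,n_0\}$ and $\norm{u}{\gamma_{1,w}}{}=v(1)\sum_i|u(i)|$ for every $u$ supported in $\{1,\dots,n_0\}$; then $x\pm\frac{\epsilon}{\phi_{\gamma_{1,w}}(n_0)}(e_1-e_2)$ are nonnegative, supported in $\{1,\dots,n_0\}$, have $\ell^1$-norm equal to that of $x$, and hence $\gamma_{1,w}$-norm $1$, so $x$ is not extreme. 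Thus $W(n_0-1)>0$ whenever $n_0>1$ is necessary.

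For sufficiency, let $x=\frac{1}{\phi}\chi_{\{i\leq n_0\}}$ with $\phi=\phi_{\gamma_{1,w}}(n_0)$ and $W(n_0-1)>0$ if $n_0>1$, and suppose $x=(y+z)/2$ with $\norm{y}{\gamma_{1,w}}{}=\norm{z}{\gamma_{1,w}}{}=1$. Since $\sum_i x(i)v(i)=1$, applying $\sum_i u(i)v(i)\leq\norm{u}{\gamma_{1,w}}{}$ to $y$ and $z$ forces $\sum_i y(i)v(i)=\norm{y}{\gamma_{1,w}}{}=1$ and $\sum_i z(i)v(i)=\norm{z}{\gamma_{1,w}}{}=1$; hence $y,z\geq0$, and then $y(i)=z(i)=0$ for $i>n_0$. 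Put $h=y-x=x-z$, supported in $\{1,\dots,n_0\}$. As $x\equiv1/\phi$ there and $x\pm h\geq0$, the decreasing rearrangement of $x+h$ equals $\frac1\phi+h^\downarrow$ on $\{1,\dots,n_0\}$, $h^\downarrow$ being the decreasing rearrangement of $(h(i))_{i\leq n_0}$; hence $1=\norm{x+h}{\gamma_{1,w}}{}=1+\sum_{i=1}^{n_0}h^\downarrow(i)v(i)$, and symmetrically $1=\norm{x-h}{\gamma_{1,w}}{}=1-\sum_{i=1}^{n_0}h^\downarrow(n_0+1-i)v(i)$. Subtracting, $\sum_{i=1}^{n_0}h^\downarrow(i)(v(i)-v(n_0+1-i))=0$; grouping the index $i$ with $n_0+1-i$ (say $i\leq n_0+1-i$), each paired term equals $(h^\downarrow(i)-h^\downarrow(n_0+1-i))(v(i)-v(n_0+1-i))\geq0$, since $h^\downarrow$ and $v$ are decreasing, so every such term vanishes. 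If $n_0>1$, then $v(1)-v(n_0)=\sum_{k=1}^{n_0-1}w(k)/k>0$ by $W(n_0-1)>0$, so $h^\downarrow(1)=h^\downarrow(n_0)$, whence $h^\downarrow$ is constant; then $0=\sum_i h^\downarrow(i)v(i)=h^\downarrow(1)\phi$ gives $h=0$, i.e. $y=z=x$; for $n_0=1$ the two identities reduce at once to $h(1)v(1)=0$, so $h=0$. Thus $x$ is extreme, and invoking Corollary~\ref{coro:extrem-point} once more completes the characterization. The routine part is the construction of the perturbations in the necessity argument; the crux is this last extremality verification, where the hypothesis $W(n_0-1)>0$ --- equivalently, $v$ nonconstant on $\{1,\dots,n_0\}$ --- is precisely what eliminates nontrivial mean-zero perturbations $h$ supported in $\{1,\dots,n_0\}$.
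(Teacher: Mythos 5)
Your proof is correct. The skeleton coincides with the paper's: reduce to $x=x^*$ via Corollary~4.5, destroy a putative second positive level set with a two-level perturbation that is null against the weight, rule out $W(n_0-1)=0$ by swapping two coordinates in the flat part, and for sufficiency show $y,z\geq 0$ with support in $\{1,\dots,n_0\}$ and then compare a decreasingly ordered sum with its reversal. The genuine difference is that you route everything through the representation $\norm{u}{\gamma_{1,w}}{}=\sum_i u^*(i)v(i)$ with $v(i)=\sum_{n\geq i}w(n)/n$ --- i.e.\ the isometry with $d_{1,v}$ that the paper records only in Remark~6.1 and deliberately does not use in its proof of this theorem. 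This buys two real simplifications. First, $y,z\geq 0$ falls out of the equality case of Hardy--Littlewood together with $v>0$, where the paper argues via strict monotonicity. Second, you avoid the paper's ``without loss of generality $y=y^*$'' normalization by working with $h=y-x=x-z$ and its decreasing rearrangement: the identity $\sum_{i=1}^{n_0}h^{\downarrow}(i)\bigl(v(i)-v(n_0+1-i)\bigr)=0$, with every paired term nonnegative, isolates exactly where $W(n_0-1)>0$ (equivalently $v(1)>v(n_0)$) enters. If you write this up, spell out the small points you currently gesture at: the absolute convergence needed to split $\sum_i(x\pm\epsilon h)(i)v(i)$ (immediate, as $h$ has finite support), and the fact that for a nonnegative sequence supported in $\{1,\dots,n_0\}$ the restriction of its decreasing rearrangement to $\{1,\dots,n_0\}$ is just the sorted tuple of its first $n_0$ values, which is what justifies $(x\pm h)^*=1/\phi_{\gamma_{1,w}}(n_0)\pm h^{\downarrow/\uparrow}$ there.
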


\begin{proof}
	Letting $x\in{S_{\gamma_{1,w}}}$, by Corollary \ref{coro:extrem-point} we may consider that $x=x^*$ is an extreme point of $B_{\gamma_{1,w}}$. Denote
	\begin{equation*}
	n_0=\sup\{n\in\mathbb{N}:x^*(n)=x^*(1)\}.
	\end{equation*}
	Since $W(\infty)=\infty$ and $\phi_{\gamma_{1,w}}(n)=W(n)+W_1(n)$ for any $n\in\mathbb{N}$, by Lemma \ref{lem:Fatou:Lorentz} and by Remark \ref{rem:embedding} it follows that $x^*(\infty)=0$ and so $n_0\in\mathbb{N}$. We claim that $x^*(n_0+1)=0$. Suppose on the contrary that $x^*(n_0+1)>0$ and denote
	\begin{equation*}
	n_1=\card\{n\in\mathbb{N}:x^*(n)=x^*(n_0+1)\}
	\end{equation*}
	and 
	\begin{equation*}
	d=\min\{x^*(1)-x^*(n_0+1),x^*(n_0+1)-x^*(n_0+n_1+1)\}.
	\end{equation*}
	First, notice that $\phi_{\gamma_{1,w}}(n+1)>\phi_{\gamma_{1,w}}(n)>0$ for any $n\in\mathbb{N}$. Indeed, since $W(\infty)=\infty$ we infer that $\phi_{\gamma_{1,w}}(n)>0$ for all $n\in\mathbb{N}$. Now, assuming for a contrary that there is $n\in\mathbb{N}$ such that $\phi_{\gamma_{1,w}}(n+1)=\phi_{\gamma_{1,w}}(n)$, we easily obtain
	\begin{equation*}
	w(n+1)=-(n+1)\sum_{i=n+2}^{\infty}\frac{w(i)}{i}<0.
	\end{equation*}
	Hence, since $w(n+1)\geq{0}$ we get a contradiction. Now, we are able to find $a,b\in(0,d)$ such that 
	\begin{equation}\label{equ:1:thm:extreme}
	b=a\frac{\phi_{\gamma_{1,w}}(n_0+n_1)-\phi_{\gamma_{1,w}}(n_0)}{\phi_{\gamma_{1,w}}(n_0)}.
	\end{equation}
	Define
	\begin{equation*}
	y=x^*-b\chi_{\{i\in\mathbb{N}:i\leq n_0\}}+a\chi_{\{i\in\mathbb{N}:n_0<i\leq n_0+n_1\}}
	\end{equation*}
	and 
	\begin{equation*}
	z=x^*+b\chi_{\{i\in\mathbb{N}:i\leq n_0\}}-a\chi_{\{i\in\mathbb{N}:n_0<i\leq n_0+n_1\}}.
	\end{equation*}
	Clearly, $y\neq{z}$ and $x=(y+z)/2$. Since $y=y^*$ and $z=z^*$, by \eqref{equ:1:thm:extreme} we have
	\begin{align*}
	\norm{y}{\gamma_{1,w}}{}=&\sum_{n=1}^{\infty}y^{**}(n){w(n)}\\
	=&\sum_{n=1}^{\infty}\frac{w(n)}{n}\sum_{j=1}^n\left(x^{*}(j)-b\chi_{\{i\in\mathbb{N}:i\leq n_0\}}(j)+a\chi_{\{i\in\mathbb{N}:n_0<i\leq n_0+n_1\}}(j)\right)\\
	=&\sum_{n=1}^{\infty}x^{**}(n)w(n)-b\left(\sum_{n=1}^{n_0}w(n)+n_0\sum_{n=n_0+1}^{\infty}\frac{w(n)}{n}\right)\\
	&+a\left(\sum_{n=n_0+1}^{n_0+n_1}\frac{w(n)}{n}(n-n_0)+n_1\sum_{n=n_0+n_1+1}^{\infty}\frac{w(n)}{n}\right)\\
	=&\norm{x}{\gamma_{1,w}}{}-b\phi_{\gamma_{1,w}}(n_0)+a\left(\phi_{\gamma_{1,w}}(n_0+n_1)-\phi_{\gamma_{1,w}}(n_0)\right)\\
	=&\norm{x}{\gamma_{1,w}}{}=1.
	\end{align*}
	Similarly, we may show that $\norm{z}{\gamma_{1,w}}{}=1$. Therefore, in view of assumption that $x$ is an extreme point of $B_{\gamma_{1,w}}$ we conclude a contradiction, which proves our claim.
	In case when $n_0>1$ we assume that $w(n)=0$ for all $n\in\{1,\dots,n_0-1\}$. Then, for $a\in(0,x^*(n_0))$ we define 
	\begin{equation*}
	y=x^*+a\chi_{\{1\}}-a\chi_{\{n_0\}}\quad\textnormal{and}\quad{}z=x^*-a\chi_{\{1\}}+a\chi_{\{n_0\}}.
	\end{equation*}
	Next, it is clearly observe that $y\neq{z}$, $x=(y+z)/2$, $y^*=y=z^*$ and 
	\begin{equation*}
	\norm{z}{\gamma_{1,w}}{}=\norm{y}{\gamma_{1,w}}{}=\sum_{n=n_0}^{\infty}\frac{w(n)}{n}\sum_{j=1}^n\left(x^{*}(j)+a\chi_{\{1\}}(j)-a\chi_{\{n_0\}}(j)\right)=1.
	\end{equation*}
	Consequently, by assumption that $x$ is an extreme point of $B_{\gamma_{1,w}}$ we have a contradiction. So, this implies that if $n_0>1$ then it is needed $W(n_0-1)>0$. Now, assume that $x\in\gamma_{1,w}$ and satisfies \eqref{extreme:point}. For simplicity of our notation we denote $c=1/\gamma_{1,w}(n_0)$. If $n_0=1$, then by Theorem \ref{thm:SM:Lorentz} we conclude that $x$ is an extreme point of $B_{\gamma_{1,w}}$. Consider that $n_0>1$. suppose that $y,z\in{S_{\gamma_{1,w}}}$, $y\neq{z}$ and $x=(y+z)/2$. We claim that $y(i)=z(i)=0$ for all $i>n_0$. Indeed, if $y(i)>0$ for some $i>n_0$, then it is obvious that $z(i)=-y(i)<0$ for some $i>n_0$. Next, defining two elements
	\begin{equation*}
	u=y\chi_{\{i\in\mathbb{N}:i\leq n_0\}}\quad\textnormal{ and }\quad{}v=z\chi_{\{i\in\mathbb{N}:i\leq n_0\}}
	\end{equation*}
	we have $x=(u+v)/2$. On the other hand, by Theorem \ref{thm:SM:Lorentz} we infer that $\norm{u}{\gamma_{1,w}}{}<\norm{y}{\gamma_{1,w}}{}=1$ and $\norm{v}{\gamma_{1,w}}{}<\norm{z}{\gamma_{1,w}}{}=1$. In consequence, we get
	\begin{equation*}
	1=\norm{x}{\gamma_{1,w}}{}=\frac{1}{2}\norm{u+v}{\gamma_{1,w}}{}\leq\frac{\norm{u}{\gamma_{1,w}}{}+\norm{v}{\gamma_{1,w}}{}}{2}<1,
	\end{equation*}  
	which yields a contradiction and proves our claim. Now, define 
	\begin{align*}
	&I_1=\{i\in\mathbb{N},i\leq{n_0};y(i)>c\},\\
	&I_2=\{i\in\mathbb{N},i\leq{n_0};y(i)=c\},\\
	&I_3=\{i\in\mathbb{N},i\leq{n_0};y(i)<c\}.
	\end{align*}
	We can easily notice that $y,z\in\gamma_{1,w}^+$. Indeed, if it is not true then we may define $u,v\in\gamma_{1,w}^+$ such that $u\leq|y|$, $u\neq|y|$ and $v\leq|z|$, $v\neq|z|$ and also $x=(u+v)/2$. Therefore, by Theorem \ref{thm:SM:Lorentz} we obtain a contradiction. Next, since $\gamma_{1,w}$ is strictly monotone and $y\in{S_{\gamma_{1,w}}}$, $y\neq{x}$ we observe that $\card(I_1)>0$ and $\card(I_3)>0$, whence $y(1)>y(n_0)$. Without loss of generality we may assume that $y=y^*$. Then, we have
	\begin{align}\label{equ:2:thm:extreme}
	1=&\sum_{n=1}^{n_0-1}y^{**}(n)w(n)+\sum_{i=1}^{n_0}y(i)\sum_{n=n_0}^\infty\frac{w(n)}{n}\\
	=&\sum_{n=1}^{n_0-1}\sum_{i=1}^{n}y(i)\frac{w(n)}{n}+\sum_{i=1}^{n_0}y(i)\sum_{n=n_0}^\infty\frac{w(n)}{n}.\nonumber
	\end{align}
	Moreover, by assumption that $z\in{S_{\gamma_{1,w}}}$ and $x=(y+z)/2$ it follows that $z(i)=2c-y(i)$ for any $i\in\{1,\dots,n_0\}$ and $z(i)=0$ for all $i>n_0$. Thus, we obtain 
	\begin{equation*}
	z^*(n)=\left(2c-y(n_0+1-n)\right)\chi_{\{i\in\mathbb{N}:i\leq n_0\}}(n)
	\end{equation*}
	for every $n\in\mathbb{N}$. Consequently, we have
	\begin{align*}
	1=&\sum_{n=1}^{n_0}z^{**}(n)w(n)+\sum_{i=1}^{n_0}z(i)\sum_{n=n_0+1}^\infty\frac{w(n)}{n}\\
	=&\sum_{n=1}^{n_0}\left(2cn-\sum_{i=1}^{n}y(n_0+1-i)\right)\frac{w(n)}{n}+\left(2cn_0-\sum_{i=1}^{n_0}y(i)\right)\sum_{n=n_0+1}^\infty\frac{w(n)}{n}\\
	=&2c\phi_{\gamma_{1,w}}(n_0)-\sum_{n=1}^{n_0}\sum_{i=1}^{n}y(n_0+1-i)\frac{w(n)}{n}-\sum_{i=1}^{n_0}y(i)\sum_{n=n_0+1}^\infty\frac{w(n)}{n}.
	\end{align*}
	Hence, by definition of $c$ we obtain that
	\begin{equation}\label{equ:3:thm:extreme}
	1=\sum_{n=1}^{n_0-1}\sum_{i=1}^{n}y(n_0+1-i)\frac{w(n)}{n}+\sum_{i=1}^{n_0}y(i)\sum_{n=n_0}^\infty\frac{w(n)}{n}.
	\end{equation}
	Furthermore, since $y=y^*$ and $y(1)>y(n_0)$, we infer that for every $n<{n_0}$, 
	\begin{equation*}
	\sum_{i=1}^{n}y(i)>\sum_{i=1}^{n}y(n_0+1-i).
	\end{equation*}
	In consequence, since $W(n_0-1)>0$, by \eqref{equ:2:thm:extreme} and \eqref{equ:3:thm:extreme} we conclude 
	\begin{align*}
	1=&\sum_{n=1}^{n_0-1}\sum_{i=1}^{n}y(i)\frac{w(n)}{n}+\sum_{i=1}^{n_0}y(i)\sum_{n=n_0}^\infty\frac{w(n)}{n}\\
	>&\sum_{n=1}^{n_0-1}\sum_{i=1}^{n}y(n_0+1-i)\frac{w(n)}{n}+\sum_{i=1}^{n_0}y(i)\sum_{n=n_0}^\infty\frac{w(n)}{n}=1,
	\end{align*}
	which gives us a contradiction and finishes the proof.
\end{proof}

\section{dual and predual spaces of sequence lorentz spaces $\gamma_{1,w}$}

Now, we present a characterization of the dual and predual spaces of the sequence Lorentz space $\gamma_{1,w}$. 

\begin{theorem}\label{thm:dual}
	Let $w=(w(n))_{n\in\mathbb{N}}$ be a nonnegative weight sequence and let $\phi_{\gamma_{1,w}}$ be the fundamental sequence of the sequence Lorentz space $\gamma_{1,w}$. Then $W(\infty)=\infty$ if and only if every linear bounded functional $f$ on $\gamma_{1,w}$ has the form 
	\begin{equation*}
	f(x)=\sum_{n=1}^\infty{x(n)y(n)}\qquad\textnormal{for any}\quad x\in\gamma_{1,w},\quad\textnormal{and}\quad\norm{f}{\gamma_{1,w}^*}{}=\norm{y}{m_\psi}{}
	\end{equation*}
	where $y\in{m_{\psi}}$ and $\psi(n)=n/\phi_{\gamma_{1,w}}(n)$ for every $n\in\mathbb{N}$. 
\end{theorem}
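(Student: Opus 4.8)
The plan is to prove the two implications separately; the substance lies in the sufficiency, while the necessity is a soft argument.

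\emph{Sufficiency.} Assume $W(\infty)=\infty$. By Theorem~\ref{thm:OC:Lorentz} the space $\gamma_{1,w}$ is order continuous, and by Lemma~\ref{lem:Fatou:Lorentz} it has the Fatou property, so the standard K\"othe duality for order continuous sequence lattices (cf.\ \cite{LinTza,BS}) applies: $\gamma_{1,w}^{*}$ is isometrically identified with the associate space $(\gamma_{1,w})'$ via $f(x)=\sum_{n}x(n)y(n)$ with $y(n)=f(e_{n})$ and $\|f\|_{\gamma_{1,w}^{*}}=\|y\|_{(\gamma_{1,w})'}$. (Order continuity gives $\|x-x\chi_{\{1,\dots,N\}}\|_{\gamma_{1,w}}\to 0$, so $f$ is determined by its values on $(e_{n})$; the matching of norms comes from the lattice structure, which allows sign changes, together with the definition $\|y\|_{(\gamma_{1,w})'}=\sup\{\sum_{n}|x(n)y(n)|:\|x\|_{\gamma_{1,w}}\le 1\}$.) It therefore remains to show $(\gamma_{1,w})'=m_{\psi}$ isometrically, with $\psi(n)=n/\phi_{\gamma_{1,w}}(n)$.

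For the associate norm, fix $y\in\ell^{0}$; as $(\gamma_{1,w})'$ is rearrangement invariant we may take $y=y^{*}$. By the Hardy--Littlewood inequality (with equality for a decreasing choice of $x$) we get
\[
\|y\|_{(\gamma_{1,w})'}=\sup\Bigl\{\sum_{n}a(n)y^{*}(n)\;:\;a\ \text{decreasing},\ \sum_{n}a^{**}(n)w(n)\le 1\Bigr\}.
\]
Put $v(n)=\sum_{k\ge n}w(k)/k$, so that $\sum_{i\le n}v(i)=W(n)+W_{1}(n)=\phi_{\gamma_{1,w}}(n)$, and note that $W(\infty)=\infty$ forces $\sum_{n}v(n)=\infty$, hence $a(\infty)=0$ for every admissible $a$. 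For such $a$ one has $\sum_{n}a^{**}(n)w(n)=\sum_{n}a(n)v(n)$ (interchange of summation), and writing $a(n)=\sum_{k\ge n}\Delta_{k}$ with $\Delta_{k}=a(k)-a(k+1)\ge 0$, Abel summation yields $\sum_{n}a(n)y^{*}(n)=\sum_{k}\Delta_{k}\sum_{i\le k}y^{*}(i)$ and $\sum_{n}a(n)v(n)=\sum_{k}\Delta_{k}\,\phi_{\gamma_{1,w}}(k)$. Therefore
\[
\sum_{n}a(n)y^{*}(n)\le\Bigl(\sup_{k}\frac{1}{\phi_{\gamma_{1,w}}(k)}\sum_{i\le k}y^{*}(i)\Bigr)\sum_{k}\Delta_{k}\,\phi_{\gamma_{1,w}}(k)\le\sup_{k}\frac{k\,y^{**}(k)}{\phi_{\gamma_{1,w}}(k)},
\]
and the admissible test sequences $a=\phi_{\gamma_{1,w}}(k)^{-1}\chi_{\{1,\dots,k\}}$ (each of $\gamma_{1,w}$-norm $1$, by the computation of $\phi_{\gamma_{1,w}}$) show the bound is attained. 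Thus $\|y\|_{(\gamma_{1,w})'}=\sup_{k}y^{**}(k)\psi(k)=\|y\|_{m_{\psi}}$; here $\psi$ is quasiconcave because $\phi_{\gamma_{1,w}}$ is (being the fundamental sequence of an r.i.\ space): $\psi$ increasing $\Leftrightarrow$ $\phi_{\gamma_{1,w}}(n)/n$ decreasing, and $\psi(n)/n=1/\phi_{\gamma_{1,w}}(n)$ decreasing $\Leftrightarrow$ $\phi_{\gamma_{1,w}}$ increasing. This gives the representation.

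\emph{Necessity.} Suppose $W(\infty)<\infty$. Then $\phi_{\gamma_{1,w}}(n)=W(n)+W_{1}(n)\uparrow W(\infty)<\infty$ (since $W_{1}(n)\le\sum_{k>n}w(k)\to 0$), and combining $\|x\|_{\gamma_{1,w}}\le W(\infty)\|x\|_{\infty}$ with $\|x\|_{\gamma_{1,w}}\ge x^{**}(n_{1})w(n_{1})\ge(w(n_{1})/n_{1})\|x\|_{\infty}$, where $n_{1}=\min\{n:w(n)>0\}$, shows that $\gamma_{1,w}$ coincides with $\ell^{\infty}$ as a set and carries an equivalent norm. Hence $\gamma_{1,w}^{*}$ is isomorphic to $(\ell^{\infty})^{*}$, which properly contains the integral functionals: transporting to $\gamma_{1,w}$ a norm-one functional on $\ell^{\infty}$ annihilating $c_{0}$ produces a nonzero bounded $f$ on $\gamma_{1,w}$ vanishing on $c_{00}$, and such an $f$ cannot be of the form $f(x)=\sum_{n}x(n)y(n)$, since that would force $y(n)=f(e_{n})=0$ for all $n$ and hence $f=0$. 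Thus the stated representation fails. The main obstacle is the associate-space computation of the third paragraph --- the reduction to decreasing $a$, the observation that the constraint forces $a(\infty)=0$, and the Abel-summation interchange converting the Lorentz constraint into the Marcinkiewicz supremum --- whereas the identification $\gamma_{1,w}^{*}=(\gamma_{1,w})'$ and the necessity direction are routine.
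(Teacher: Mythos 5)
Your proof is correct and follows essentially the same route as the paper: order continuity (from Theorem \ref{thm:OC:Lorentz}) identifies $\gamma_{1,w}^*$ with the K\"othe associate space, the substitution $v(n)=\sum_{k\ge n}w(k)/k$ with $\sum_{i\le n}v(i)=\phi_{\gamma_{1,w}}(n)$ converts the Lorentz constraint, and the elements $\phi_{\gamma_{1,w}}(k)^{-1}\chi_{\{1,\dots,k\}}$ norm the functional; your Abel-summation computation of the associate norm is just a one-shot repackaging of the paper's decomposition $x^*=\sum_k a_k\chi_{[1,i_k]}$ plus density. The only cosmetic difference is in the direction $W(\infty)<\infty$, where you exhibit an explicit functional annihilating $c_0$ on $\gamma_{1,w}\cong\ell^\infty$, while the paper derives the same contradiction from $m_\psi=\ell^1$ and $(\ell^1)^*=\ell^\infty$.
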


\begin{proof}
	\textit{Sufficiency.} Suppose that $W(\infty)<\infty$. We claim that $\ell^\infty\hookrightarrow\gamma_{1,w}$. Indeed, taking $x=\chi_{\mathbb{N}}$ it is easy to see that $x^{**}=x$ and $\norm{x}{\gamma_{1,w}}{}=W(\infty)<\infty$, which implies our claim. Let $f\in\gamma_{1,w}^*$. Then, by assumption there exists $y\in{m_\psi}$ such that 
	\begin{equation*}
	\norm{f}{\gamma_{1,w}^*}{}=\norm{y}{m_\psi}{}\geq\frac{1}{\phi_{\gamma_{1,w}}(n)}\sum_{i=1}^{n}y^*(k)
	\end{equation*}
	for all $n\in\mathbb{N}$. Next, in view of the inequality 
	\begin{equation*}
	W(n)\leq\phi_{\gamma_{1,w}}(n)\leq{W(\infty)<\infty}
	\end{equation*}
	for every $n\in\mathbb{N}$, it follows that $\phi_{\gamma_{1,w}}(\infty)=W(\infty)<\infty$. Thus , we have 
	\begin{equation*}
	{\phi_{\gamma_{1,w}}(\infty)}\norm{f}{\gamma_{1,w}^*}{}\geq\sum_{i=1}^{\infty}y^*(k),
	\end{equation*}
	whence $y\in\ell^1$. Therefore, we observe that $m_\psi\hookrightarrow\ell^1$. Moreover, since $\gamma_{1,w}$ and $m_\psi$ are symmetric by Corollary 6.8 in \cite{BS} we conclude that $\gamma_{1,w}\hookrightarrow\ell^\infty$ and $\ell^1\hookrightarrow{m_\psi}$. Hence, since $\ell^\infty$ is the dual space of $\ell^1$ (see \cite{LinTza}) we have a contradiction.\\	
	\textit{Necessity.}
	Since $\gamma_{1,w}$ is a symmetric space, by Corollary 4.4 and Theorem 2.7 in \cite{BS} we get the associate space $\gamma_{1,w}'$ of $\gamma_{1,w}$ is a symmetric space and 
	\begin{equation*}
	\gamma_{1,w}'=\left\{y=(y(n)):\sup_{x\in B_{\gamma_{1,w}}}\left\{\sum_{n=1}^\infty{y^*(n)}x^*(n)\right\}<\infty\right\}.
	\end{equation*}	
	Next, in view of Theorem \ref{thm:OC:Lorentz} it follows that $\gamma_{1,w}$ is order continuous if and only if $W(\infty)=\infty$. Hence, by Theorem 4.1 in \cite{BS} we have $W(\infty)=\infty$ if and only if the dual space $\gamma_{1,w}^*$ and the associate space $\gamma_{1,w}'$ of the sequence Lorentz space $\gamma_{1,w}$ coincide, i.e. $\gamma_{1,w}^*=\gamma_{1,w}'$. Consequently, assuming that $\psi$ is the fundamental sequence of the dual space $\gamma_{1,w}^*$, by Remark \ref{rem:embedding} we conclude that $\gamma_{1,w}^*\hookrightarrow{m_{\psi}}$.
	Now, we prove the reverse embedding, i.e. ${m_{\psi}}\hookrightarrow\gamma_{1,w}^*$.
	First, by Theorem 5.2 in \cite{BS} we obtain $\psi(n)\phi_{\gamma_{1,w}}(n)=n$ for all $n\in\mathbb{N}$. Therefore, we have
	\begin{align}\label{equ:marcinkie}
	m_{\psi}&=\left\{x=(x(n))_{n\in\mathbb{N}}:\sup_{n\in \mathbb{N}}\{x^{**}(n)\psi(n)\}<\infty\right\}\\
	&=\left\{x=(x(n))_{n\in\mathbb{N}}:\sup_{n\in \mathbb{N}}\left\{\frac{\sum_{i=1}^{n}x^{*}(i)}{\phi_{\gamma_{1,w}}(n)}\right\}<\infty\right\}.\nonumber
	\end{align}
	We claim that for any $y\in{m_{\psi}}$ the mapping $f_y$ given by
	\begin{equation*}
	f_y(x)=\sum_{n=1}^\infty{x(n)y(n)}\quad\textnormal{for all}\quad x\in\gamma_{1,w}
	\end{equation*}
	is a linear bounded functional on $\gamma_{1,w}$.
	Indeed, taking $y\in{m_\psi}$, by the Hardy-Littlewood inequality we obtain that for any $x\in\gamma_{1,w}$,
	\begin{equation*}
	\abs{f_y(x)}{}{}\leq\sum_{n=1}^{\infty}|x(n)y(n)|\leq\sum_{n=1}^{\infty}x^*(n)y^*(n).
	\end{equation*}
	Now, for simplicity of our notation let us denote $[1,i]=\{1,2,\dots,i\}$ for any $i\in\mathbb{N}$. Next, picking $x\in\gamma_{1,w}$ with a finite measure support, without loss of generality we may assume that
	\begin{equation*}
	x^*(n)=\sum_{k=1}^{N}a_k\chi_{[1,i_k]}(n),
	\end{equation*}
	 for every $n\in\mathbb{N}$, where $(i_k)_{k=1}^N\subset\mathbb{N}$ is strictly increasing and $a_k>0$ for any $k\in\{1,\dots,N\}$. Then, we get
	 \begin{align}\label{equ1:dual}
	 \abs{f_y(x)}{}{}&\leq\sum_{n=1}^{\infty}x^*(n)y^*(n)=\sum_{k=1}^{N}a_k\sum_{n=1}^{\infty}\chi_{[1,i_k]}(n)y^*(n)\\
	 &=\sum_{k=1}^{N}a_k\sum_{n=1}^{i_k}y^*(n)\leq\sum_{k=1}^{N}a_k\phi_{\gamma_{1,w}}(i_k)\sup_{1\leq{k}\leq{N}}\left\{\frac{\sum_{n=1}^{i_k}y^*(n)}{\phi_{\gamma_{1,w}}(i_k)}\right\}\nonumber\\
	 &\leq\norm{y}{m_\psi}{}\sum_{k=1}^{N}a_k\phi_{\gamma_{1,w}}(i_k).\nonumber
     \end{align}
	 Furthermore, we observe that
	 \begin{align*}
	 \norm{x}{\gamma_{1,w}}{}&=\sum_{n=1}^\infty{x^{**}(n)w(n)}=\sum_{n=1}^\infty\frac{w(n)}{n}\sum_{j=1}^{n}\sum_{k=1}^{N}a_k\chi_{[1,i_k]}(j)\\
	 &=\sum_{n=1}^\infty\frac{w(n)}{n}\sum_{k=1}^{N}a_k\min\{n,i_k\}\\
	 &=\sum_{k=1}^{N}a_k\sum_{n=1}^\infty\frac{w(n)}{n}\min\{n,i_k\}
	 \end{align*}
	 and 
	 \begin{align*}
	 \phi_{\gamma_{1,w}}(i_k)=W(i_k)+W_1(i_k)=\sum_{n=1}^{\infty}\frac{w(n)}{n}\min\{n,i_k\}
	 \end{align*}
	 for every $k\in\{1,\dots,N\}$. Hence, by \eqref{equ1:dual} it follows that
	 \begin{equation}\label{equ2:dual}
	 \abs{f_y(x)}{}{}\leq\norm{y}{m_\psi}{}\norm{x}{\gamma_{1,w}}{}
	 \end{equation}
	 for any $x\in\gamma_{1,w}$ with a finite measure support. Finally, since $W(\infty)=\infty$, by Theorem \ref{thm:OC:Lorentz} we get $\gamma_{1,w}$ is order continuous, and so every element $x\in\gamma_{1,w}$ can be expressed as a limit of a sequence of elements in $\gamma_{1,w}$ with finite measure support. Thus, we conclude that \eqref{equ2:dual} holds for any $x\in\gamma_{1,w}$. Now, we show that there exists an isometry between ${m_\psi}$ and ${\gamma_{1,w}^*}$. Next, it is easy to see that there exists a permutation $\sigma:\mathbb{N}\rightarrow\mathbb{N}$ such that $y^*(n)=\abs{y\circ\sigma(n)}{}{}$ for any $n\in\mathbb{N}$. We present the proof in two cases.\\
	 \textit{Case $1.$} Assume that there is $n_0\in\mathbb{N}$ such that 
	 \begin{equation}\label{equ:assum:1}
	 \norm{y}{m_\psi}{}=\frac{1}{\phi_{\gamma_{1,w}}(n_0)}\sum_{i=1}^{n_0}y^*(i)
	 \end{equation}
	 Without loss of generality we may assume that $y^*(i)>0$ for all $i\in[1,n_0]$. Define 
	 \begin{equation*}
	 x(n)=
	 \begin{cases}
	 \frac{\sg(y(n))}{\phi_{\gamma_{1,w}}(n_0)}&\textnormal{if}\quad{}n\in\sigma{([1,n_0])},\\
	 0&\textnormal{otherwise.}
	 \end{cases}	
	 \end{equation*}
	 Then, we have for any $n\in\mathbb{N}$,
	 \begin{equation*}
	 x^{**}(n)=\frac{1}{n}\sum_{i=1}^{n}\frac{1}{\phi_{\gamma_{1,w}}(n_0)}\chi_{[1,n_0]}(i)=\frac{1}{\phi_{\gamma_{1,w}}(n_0)}\left(\chi_{[1,n_0]}(n)+\frac{n_0}{n}\chi_{\mathbb{N}\setminus[1,n_0]}(n)\right).
	 \end{equation*}
	 Consequently, we get
	 \begin{equation*}
	 \norm{x}{\gamma_{1,w}}{}=\sum_{n=1}^\infty{x^{**}(n)w(n)}=\frac{1}{\phi_{\gamma_{1,w}}(n_0)}\left(\sum_{n=1}^{n_0}w(n)+n_0\sum_{n=n_0+1}^{\infty}\frac{w(n)}{n}\right)=1.
	 \end{equation*}
	 Next, by \eqref{equ:assum:1} we observe that 
	 \begin{align}\label{equ:case:1}
	 f_y(x)=\sum_{n=1}^{\infty}x(n)y(n)&=\frac{1}{\phi_{\gamma_{1,w}}(n_0)}\sum_{n\in\sigma([1,n_0])}|y(n)|\\
	 &=\frac{1}{\phi_{\gamma_{1,w}}(n_0)}\sum_{n\in[1,n_0]}|y\circ\sigma(n)|\nonumber\\
	 &=\frac{1}{\phi_{\gamma_{1,w}}(n_0)}\sum_{n=1}^{n_0}y^*(n)=\norm{y}{m_\psi}{}.\nonumber
	 \end{align}
	 \textit{Case $2.$} Suppose that 
	 \begin{equation}\label{equ:assum:2}
	 \norm{y}{m_\psi}{}=\limsup_{n\rightarrow\infty}\frac{1}{\phi_{\gamma_{1,w}}(n)}\sum_{i=1}^{n}y^*(i).
	 \end{equation}
	 Then, defining a sequence $(x_m)$ by
	 \begin{equation*}
	 x_m(n)=
	 \begin{cases}
	 \frac{\sg(y(n))}{\phi_{\gamma_{1,w}}(m)}&\textnormal{if}\quad{}n\in\sigma{([1,m])},\\
	 0&\textnormal{otherwise.}
	 \end{cases}	
	 \end{equation*}
	 and proceeding analogously as in case $1$ we obtain $\norm{x_m}{\gamma_{1,w}}{}=1$ for all $m\in\mathbb{N}$. Moreover, by \eqref{equ:assum:2}, passing to subsequence and relabeling if necessary we have
	 \begin{equation}\label{equ:case:2}
	 f_y(x_m)=\frac{1}{\phi_{\gamma_{1,w}}(m)}\sum_{n\in\sigma([1,m])}|y(n)|=\frac{1}{\phi_{\gamma_{1,w}}(m)}\sum_{n=1}^{m}y^*(n)\rightarrow\norm{y}{m_\psi}{}.
	 \end{equation}	 
	 Finally, combining both cases and according to \eqref{equ2:dual}, \eqref{equ:case:1} and \eqref{equ:case:2} we finish the proof.
\end{proof}

\begin{theorem}\label{thm:predual}
	 Let $w$ be a nonnegative weight sequence. The Marcinkiewicz space $m_\psi^0$ is the predual of the sequence Lorentz space $\gamma_{1,w}$ if and only if $W(\infty)=\infty$, where $$\psi(n)=\frac{n}{\phi_{\gamma_{1,w}}(n)}\quad\textnormal{for any}\quad n\in\mathbb{N}.$$
	 Additionally, if $W(\infty)=\infty$ then there exists an isometry between the sequence Lorentz space $\gamma_{1,w}$ and the dual space $(m_\psi^0)^*$ of the Marcinkiewicz space $m_\psi^0$. 
\end{theorem}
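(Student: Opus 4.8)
The plan is to establish the two embeddings $m_\psi^0 \hookrightarrow {}^*(\gamma_{1,w})$ and ${}^*(\gamma_{1,w}) \hookrightarrow m_\psi^0$ (isometrically) under the hypothesis $W(\infty)=\infty$, and then to verify that when $W(\infty)<\infty$ the space $m_\psi^0$ cannot be the predual. For the necessity of $W(\infty)=\infty$, I would argue as in Theorem \ref{thm:dual}: if $W(\infty)<\infty$, then $\ell^\infty \hookrightarrow \gamma_{1,w}$ with $\phi_{\gamma_{1,w}}(\infty)=W(\infty)<\infty$, which forces $\psi(\infty)=\infty$ and hence (by the characterization of $m_\psi^0$ via order continuity, Remark \ref{rem:embedding} applied to the fundamental sequence $\psi$) gives $m_\psi^0 \hookrightarrow c_0$ up to equimeasurability, together with $\gamma_{1,w}\hookrightarrow \ell^\infty$; since $\gamma_{1,w}$ would then be a dual space of a space isometric to a subspace of $c_0$, a Fatou/reflexivity obstruction (a bounded subset of $\gamma_{1,w}$ need not be weak-$*$ compact, or simply: $\ell^1=(c_0)^*$ does not embed appropriately) contradicts $\gamma_{1,w}=(m_\psi^0)^*$. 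In fact the cleanest route is: the predual of $\gamma_{1,w}$, if it exists, must contain the finitely supported sequences densely and its dual must be $\gamma_{1,w}$ with its given norm; comparing fundamental sequences and using $\ell^\infty\hookrightarrow\gamma_{1,w}$ when $W(\infty)<\infty$ yields a contradiction exactly parallel to the sufficiency part of Theorem \ref{thm:dual}.

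For the main direction, assume $W(\infty)=\infty$. By Theorem \ref{thm:dual} we already know $\gamma_{1,w}^* = m_\psi$ isometrically via $y\mapsto f_y$, $f_y(x)=\sum_n x(n)y(n)$, with $\norm{f_y}{\gamma_{1,w}^*}{}=\norm{y}{m_\psi}{}$, and from (\ref{equ:marcinkie}) we have the explicit description $\norm{y}{m_\psi}{}=\sup_n \phi_{\gamma_{1,w}}(n)^{-1}\sum_{i=1}^n y^*(i)$. The subspace $m_\psi^0$ of $m_\psi$ consists of those $y$ for which $\phi_{\gamma_{1,w}}(n)^{-1}\sum_{i=1}^n y^*(i)\to 0$; this is precisely the order-continuous part of $m_\psi$. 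The claim is that $(m_\psi^0)^* = \gamma_{1,w}$ isometrically. First I would check that each $x\in\gamma_{1,w}$ defines a bounded functional on $m_\psi^0$ by $y\mapsto \sum_n x(n)y(n)$ of norm $\le \norm{x}{\gamma_{1,w}}{}$ (this is just the Hardy--Littlewood inequality together with (\ref{equ2:dual}) read the other way, or directly: $|\sum x(n)y(n)|\le \sum x^*y^* \le \norm{y}{m_\psi}{}\norm{x}{\gamma_{1,w}}{}$). That the norm is attained, so the map is isometric into $(m_\psi^0)^*$, follows by choosing, for $x=x^*$ finitely supported of the step form used in Theorem \ref{thm:dual}, a normalized $y\in m_\psi^0$ supported on an initial segment that realizes the Hardy--Littlewood equality; since $y$ has finite support it automatically lies in $m_\psi^0$. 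Density of finitely supported $x$ in $\gamma_{1,w}$ (Theorem \ref{thm:OC:Lorentz}, order continuity) extends the isometry to all of $\gamma_{1,w}$.

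The substantive step is surjectivity: every $F\in (m_\psi^0)^*$ arises from some $x\in\gamma_{1,w}$. Here I would use that $m_\psi^0$, being order continuous with the Fatou property failing only "at infinity", has associate space $(m_\psi^0)' = (m_\psi)' = d_{1,w}$ in the sense of Köthe duality — more precisely, since $m_\psi$ has fundamental function $\psi$ with $\psi(n)\phi_{\gamma_{1,w}}(n)=n$, its Köthe dual is the Lorentz space whose norm is $\sum y^*(n)\,w(n)$, which is $\gamma_{1,w}$ restricted to its own norm via the identification $\norm{\cdot}{d_{1,w}}{}$ and the known isometry $d_{1,w}\cong\gamma_{1,w}$ referenced in the introduction. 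Then Theorem 4.1 and Corollary 4.4 of \cite{BS} give $(m_\psi^0)^* = (m_\psi^0)'$ because $m_\psi^0$ is order continuous, and $(m_\psi^0)' = m_\psi' = \gamma_{1,w}$ as a set with matching norm. I expect the main obstacle to be exactly this identification of the Köthe dual $m_\psi'$ with $\gamma_{1,w}$ in the atomic setting — one must verify carefully, using the discrete Hardy inequality and the relation $\phi_{\gamma_{1,w}}(n)=W(n)+W_1(n)=\sum_{i=1}^\infty (w(i)/i)\min\{n,i\}$ already computed in the proof of Theorem \ref{thm:dual}, that $\sup\{\sum x^*(n)y^*(n): \norm{y}{m_\psi}{}\le 1\} = \sum x^*(n)w(n) = \norm{x}{d_{1,w}}{} = \norm{x}{\gamma_{1,w}}{}$, rather than merely comparable norms; the supremum is attained along the step functions $\chi_{[1,n]}/\phi_{\gamma_{1,w}}(n)$, which is what makes the constant equal to $1$. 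Once the Köthe duality is pinned down isometrically, combining it with order continuity of $m_\psi^0$ and with the isometric embedding $\gamma_{1,w}\hookrightarrow (m_\psi^0)^*$ from the previous paragraph closes the argument.
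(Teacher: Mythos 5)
Your overall architecture is viable and genuinely different from the paper's. The paper obtains the predual statement from the biduality criterion of \cite{KamLee} (Theorem 3.4) together with Theorem \ref{thm:dual} and the equivalence $W(\infty)=\infty\Leftrightarrow\phi_{\gamma_{1,w}}(\infty)=\infty$, and then proves the isometry $\gamma_{1,w}\cong(m_\psi^0)^*$ by an explicit construction: it evaluates $f_x$ at the element $y(n)=\sg(x(n))v(\sigma^{-1}(n))$ built from $v(i)=\sum_{k\geq i}w(k)/k$. You instead route surjectivity through K\"othe duality: $(m_\psi^0)^*=(m_\psi^0)'$ by order continuity and \cite{BS} (Theorem 4.1), then $(m_\psi^0)'=(m_\psi)'$. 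That is a legitimate alternative; moreover your insistence on norming with \emph{finitely supported} $y$ (which automatically lie in $m_\psi^0$) followed by a density argument is actually cleaner than the paper's construction, whose norming element satisfies $y^*=v$ when $x^*$ is everywhere positive, hence $y^{**}(n)\psi(n)\equiv 1$ and $y\notin m_\psi^0$.

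However, the step you yourself single out as the main obstacle is stated with a false identity. The K\"othe dual of $m_\psi$ is the Lorentz space with weight $v(n)=\phi_{\gamma_{1,w}}(n)-\phi_{\gamma_{1,w}}(n-1)=\sum_{k\geq n}w(k)/k$, not $w$: the correct statement is
\begin{equation*}
\sup\Bigl\{\sum_{n}x^*(n)y^*(n):\norm{y}{m_\psi}{}\leq 1\Bigr\}=\sum_{n=1}^{\infty}x^*(n)v(n)=\norm{x}{d_{1,v}}{}=\norm{x}{\gamma_{1,w}}{},
\end{equation*}
whereas your claimed value $\sum_n x^*(n)w(n)=\norm{x}{d_{1,w}}{}$ differs in general from $\norm{x}{\gamma_{1,w}}{}$. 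Test $x=e_1$: the supremum equals $\sup\{y^*(1):y^{*}(1)\leq\phi_{\gamma_{1,w}}(1)\}=\phi_{\gamma_{1,w}}(1)=\sum_k w(k)/k=\norm{e_1}{\gamma_{1,w}}{}$, while $\norm{e_1}{d_{1,w}}{}=w(1)$. Correspondingly, the isometry you invoke is $\gamma_{1,w}\cong d_{1,v}$ (Remark \ref{rem:isometry:lorentz}), not $d_{1,w}\cong\gamma_{1,w}$. Once $w$ is replaced by $v$ throughout that paragraph the K\"othe-duality computation does close the argument; as written, the verification you propose would fail. Finally, your necessity argument is only a gesture; the clean route is $W(n)\leq\phi_{\gamma_{1,w}}(n)\leq W(\infty)$, giving $W(\infty)=\infty\Leftrightarrow\phi_{\gamma_{1,w}}(\infty)=\infty$, combined with Theorem 3.4 of \cite{KamLee} and Theorem \ref{thm:dual}.
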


\begin{proof}
	 First, we define for any $i\in\mathbb{N}$,
	 \begin{equation*}
	 v(i)=\sum_{k=i}^{\infty}\frac{w(k)}{k}.
	 \end{equation*}
	 Clearly, $(v(i))_{i\in\mathbb{N}}$ is a decreasing sequence and $0\leq v(i)<\infty$ for all $i\in\mathbb{N}$. Moreover, we easily observe that for every $i\in\mathbb{N}$,
	 \begin{equation*}
	 v(i)=\sum_{k=i}^{\infty}\frac{w(k)}{k}=w(i)+\sum_{k=i+1}^{\infty}\frac{w(k)}{k}-\frac{(i-1)w(i)}{i}.
	 \end{equation*}
	 Hence, we evaluate 
	 \begin{align*}
	 \phi_{\gamma_{1,w}}(n)&=\sum_{i=1}^n{w(i)}+n\sum_{i=n+1}^\infty\frac{w(i)}{i}\\
	 &=\sum_{i=1}^n\left(w(i)+\sum_{k=i+1}^{\infty}\frac{w(k)}{k}-\frac{(i-1)w(i)}{i}\right)=\sum_{i=1}^n{v(i)}
	 \end{align*}
	 for all $n\in\mathbb{N}$. Next, since $\psi(n)=n/\phi_{\gamma_{1,w}}(n)$ for each $n\in\mathbb{N}$, by \eqref{equ:marcinkie} and by Theorem 3.4 in \cite{KamLee} it follows that $m_\psi$ is the bidual of $m_\psi^0$ if and only if $\phi_{\gamma_{1,w}}(\infty)=\infty$. Now, we claim that $W(\infty)=\infty$ if and only if $\phi_{\gamma_{1,w}}(\infty)=\infty$. Indeed, it is easy to see that for any $n\in\mathbb{N}$,
	 \begin{align*}
	 W(n)\leq\phi_{\gamma_{1,w}}(n)\leq\sum_{i=1}^nw(i)+n\sum_{i=n+1}^\infty\frac{w(i)}{n}=W(\infty),
	 \end{align*}
	 which implies our claim. Therefore, according to Theorem \ref{thm:dual} we obtain that the Marcinkiewicz space $m_\psi^0$ is the predual of the sequence Lorentz space $\gamma_{1,w}$ if and only if $W(\infty)=\infty$. Now, we show that there exists an isometry between $(m_\psi^0)^*$ and $\gamma_{1,w}$. First, since $\phi_{\gamma_{1,w}}(\infty)=\infty$, in view of Theorem 3.2 in \cite{KamLee} it follows that $m_\psi^0$ is a non-trivial subspace of all order continuous elements of $m_\psi$. Then, defining for any $x\in\gamma_{1,w}$ the linear mapping $f_x$ by
	 \begin{equation*}
	 f_x(y)=\sum_{n=1}^\infty{x(n)y(n)}\quad\textnormal{ for any }\quad y\in{}m_\psi^0,
	 \end{equation*}
	 and proceeding analogously as in Theorem \ref{thm:dual} we are able to show that
	 \begin{equation}\label{equ:dual:2:inequal}
	 \abs{f_x(y)}{}{}\leq\norm{y}{m_\psi}{}\norm{x}{\gamma_{1,w}}{}
	 \end{equation}
	 for any $y\in{m_\psi^0}$. On the other hand, it is well known that there exists $\sigma:\mathbb{N}\rightarrow\mathbb{N}$ a permutation such that $x^*(n)=|x\circ\sigma(n)|$ for all $n\in\mathbb{N}$. Define
	 \begin{equation*}
	 y(n)=
	 \begin{cases}
	 \sg(x(n))v(\sigma^{-1}(n))&\textnormal{if }n\in\sigma(\mathbb{N}),\\
	 0&\textnormal{otherwise,}
	 \end{cases}
	 \end{equation*} 
	 for any $n\in\mathbb{N}$. Then, we have 
	 \begin{equation*}
	 f_x(y)=\sum_{n=1}^\infty{x(n)y(n)}=\sum_{n\in\sigma(N)}{|x(n)|v\circ\sigma^{-1}(n)}=\sum_{n=1}^\infty{x^*(n)v(n)}=\norm{x}{\gamma_{1,w}}{},
	 \end{equation*}	
	 whence, according to \eqref{equ:dual:2:inequal} we finish the proof.
\end{proof}

\section{Application}

This section is devoted to a relationship between the existence set and one-complemented subspaces of the sequence Lorentz space $\gamma_{1,w}$. Moreover, we present a complete characterization of smooth points in the sequence Lorentz space $\gamma_{1,w}$ and its dual space and predual space. Finally we show full criteria for extreme points in the dual space of the sequence Lorentz space $\gamma_{1,w}$.

First, let us recall some basic definitions and notations that corresponds to the best approximation. 
Let $X$ be a Banach space and $C\subset{X}$ be a nonempty set. A continuous surjective mapping $P:X\rightarrow{C}$ is called a projection onto $C$, whenever $P|_{C}=\id$, i.e. $P^2=P$. Given a subspace $V$ of a Banach space $X$, by $P(X,V)$ we denote the set of all linear bounded projections from $X$ onto $V$. Let us recall that a closed subspace $V$ of a Banach space $X$ is said to be \textit{one-complemented} if there exists a norm one projection $P\in P(X,V)$.
A set $C\subset X$ is said to be an \textit{existence set} of the best approximation if for any $x\in{X}$ we have 
\begin{equation*}
R_C(x)=\left\{y\in C:\norm{x-y}{X}{}=\inf_{c\in C}\norm{x-c}{X}{}\right\}\neq\emptyset.
\end{equation*} 
It is obvious that any one-complemented subspace is an existence set. The converse in general is not true. By a deep result of Lindenstrauss \cite{Lind} there exists a Banach space $X$ and a linear subspace $V$ of $X$ such that $V$ is an existence set in $X$ and $V$ is not one-complemented in $X$. However, if $X$ is a smooth Banach space both notions are equivalent (see Proposition 5 in \cite{BeaMor}). We will show that both notions are equivalent in $\gamma_{1,w}$, which is obviously not a smooth space.  

First, we establish an isometry between the sequence Lorentz spaces $\gamma_{1,w}$ and $d_{1,v}$ for some nonnegative sequences $w$ and $v$.

\begin{remark}\label{rem:isometry:lorentz}
	Assuming that $w=(w(n))_{n\in\mathbb{N}}$ is a nonnegative weight sequence, we may easily show that there exits a linear surjective isometry $T$ form the sequence Lorentz space $\gamma_{1,w}$ onto the sequence Lorentz space $d_{1,v}$, where $v=(v(n))_{n\in\mathbb{N}}$ is given by
	\begin{equation}\label{new:weight}
	v(i)=\sum_{n=i}^\infty\frac{w(n)}{n}\quad\textnormal{ for any }\quad i\in\mathbb{N}.
	\end{equation}
	Indeed, taking $x\in\gamma_{1,w}$ we observe that
	\begin{equation*}
	\norm{x}{\gamma_{1,w}}{}=\sum_{n=1}^\infty\frac{w(n)}{n}\sum_{i=1}^{n}{x}^{*}(i)=\sum_{i=1}^\infty{x^*(i)}\sum_{n=i}^{\infty}\frac{w(n)}{n}=\sum_{n=1}^\infty{x^*(n)v(n)}=\norm{x}{d_{1,v}}{}.
	\end{equation*}
\end{remark}

\begin{theorem}
	Let $w$ be a nonnegative weight sequence and let $V\subset\gamma_{1,w}$, $V\neq\{0\}$ be a linear subspace. If $V$ is an existence set, then $V$ is one-complemented.
\end{theorem}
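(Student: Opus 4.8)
The plan is to transfer the assertion, via the isometry of Remark~\ref{rem:isometry:lorentz}, to the classical Lorentz sequence space $d_{1,v}$, where the corresponding fact is available.

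First I would put $v(i)=\sum_{n=i}^{\infty}w(n)/n$ for $i\in\mathbb{N}$. Since $w$ is a nontrivial nonnegative weight sequence and $W_1(1)<\infty$ by the standing hypothesis, the sequence $v$ is nonnegative, decreasing, nontrivial and finite-valued, so $d_{1,v}$ is a genuine classical Lorentz sequence space; by Remark~\ref{rem:isometry:lorentz} there is a surjective linear isometry $T\colon\gamma_{1,w}\to d_{1,v}$.

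Next I would use that a surjective linear isometry carries existence sets onto existence sets and norm-one projections onto norm-one projections. Indeed, for $y=Tx\in d_{1,v}$ one has $\inf_{c\in TV}\norm{y-c}{d_{1,v}}{}=\inf_{u\in V}\norm{x-u}{\gamma_{1,w}}{}$ and $R_{TV}(Tx)=T(R_V(x))$, so $TV$ is a nontrivial linear existence set in $d_{1,v}$; and if $Q\in P(d_{1,v},TV)$ with $\norm{Q}{}{}=1$, then $P:=T^{-1}QT$ is a bounded linear map with $P^{2}=P$ and $P|_{V}=\id$, and $\norm{P}{}{}=1$ (here $\norm{P}{}{}\ge 1$ because $V\neq\{0\}$). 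Hence $V$ is one-complemented in $\gamma_{1,w}$, as required.

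The substantive ingredient — and the only genuine obstacle — is the production of the projection $Q$ above, that is, the fact that \emph{every nontrivial existence subspace of the classical Lorentz space $d_{1,v}$ is one-complemented}. This is precisely the characterization of one-complemented subspaces of $d_{1,w}$ obtained in \cite{KamLeeLew} (see also \cite{JamKamLew,KamLew}), which rests on the geometric description of the extreme and smooth points of $d_{1,w}$ together with the fact that the metric projection onto an existence subspace can there be chosen linear and contractive. Granting that result for $d_{1,v}$, the passage through $T$ in the previous step is entirely formal, and the proof is complete.
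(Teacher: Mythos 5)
Your proposal is correct and follows essentially the same route as the paper: transfer the problem via the surjective linear isometry $T:\gamma_{1,w}\to d_{1,v}$ of Remark \ref{rem:isometry:lorentz}, invoke the result of \cite{KamLeeLew} (their Theorem 3.10) that nontrivial existence subspaces of the classical Lorentz space $d_{1,v}$ are one-complemented, and pull the projection back as $T^{-1}QT$. The only cosmetic difference is that you verify directly that isometries preserve existence sets and norm-one projections, whereas the paper cites Lemma 3.4 of \cite{KamLeeLew} for that transfer.
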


\begin{proof}
	Let $v$ be a nonnegative sequence given by \eqref{new:weight}. Then, by Remark \ref{rem:isometry:lorentz} there exists a linear surjective isometry $T:\gamma_{1,w}\rightarrow{d_{1,v}}$. Hence, since $V$ is an existence set in $\gamma_{1,w}$, by Lemma 3.4 in \cite{KamLeeLew} it follows that $T(V)\neq\{0\}$ is an existence set in $d_{1,v}$. In consequence, by Theorem 3.10 in \cite{KamLeeLew} we infer that $T(V)$ is one complemented in $d_{1,v}$. Finally, applying again Lemma 3.4 in \cite{KamLeeLew} we get that $V$ is one-complemented in $\gamma_{1,w}$.
\end{proof}

We present a full criteria for smooth points in the sequence Lorentz space $\gamma_{1,w}$ and its dual and predual spaces.  
First, let us notice that by Theorem 1.10 in \cite{KamLeeLew} and by Remark \ref{rem:isometry:lorentz}, the next theorem follows immediately. 

\begin{theorem}
	Let $w$ be a nonnegative weight sequence and let $x\in{S_{\gamma_{1,w}}}$. Then, an element $x$ is a smooth point in $\gamma_{1,w}$ if and only if the following conditions are satisfied
	\begin{itemize}
		\item[$(i)$] $\card(\supp(x))=\infty$.
		\item[$(ii)$] If there is $n\in\mathbb{N}$ such that $w(n)>0$, then $x^*(n)>x^*(n+1)$.
	\end{itemize} 
\end{theorem}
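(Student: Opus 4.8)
The plan is to transport the problem to the classical sequence Lorentz space $d_{1,v}$ via the isometry $T$ from Remark~\ref{rem:isometry:lorentz}, where the weight $v$ is defined by \eqref{new:weight}, and then to quote the known characterization of smooth points in $d_{1,v}$ from \cite{KamLeeLew}. The key observation is that smoothness is an isometric invariant: if $T:\gamma_{1,w}\to d_{1,v}$ is a surjective linear isometry, then $x\in S_{\gamma_{1,w}}$ is a smooth point of $\gamma_{1,w}$ if and only if $Tx$ is a smooth point of $d_{1,v}$, because $T$ induces an isometric bijection between the supporting functionals of $x$ and those of $Tx$ (via $f\mapsto f\circ T^{-1}$). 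So the whole task reduces to translating the $d_{1,v}$-conditions back through $T$.

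First I would recall precisely the statement of Theorem~1.10 in \cite{KamLeeLew}: an element $z\in S_{d_{1,v}}$ is a smooth point of $d_{1,v}$ if and only if $\card(\supp(z))=\infty$ and, whenever $v(n)>0$ for some $n$, one has $z^*(n)>z^*(n+1)$ (i.e. the decreasing rearrangement is strictly decreasing at every index where the weight is positive). Then I would note that $T$ can be taken to act "diagonally" on coordinates up to a rearrangement — in particular $T$ preserves supports up to a bijection of $\mathbb{N}$ and preserves decreasing rearrangements, so $(Tx)^*=x^*$ and $\card(\supp(Tx))=\card(\supp(x))$. The final step is to match the weight conditions: by the computation in Remark~\ref{rem:isometry:lorentz} we have $v(n)=\sum_{k=n}^\infty w(k)/k$, and since $w$ is nonnegative, $v(n)>0$ holds for a given $n$ exactly when $w(k)>0$ for some $k\geq n$. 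Because $v$ is decreasing, the set of $n$ with $v(n)>0$ is an initial segment, and the condition "$z^*(n)>z^*(n+1)$ whenever $v(n)>0$" for $z=Tx$ translates, using $(Tx)^*=x^*$, into "$x^*(n)>x^*(n+1)$ whenever $v(n)>0$". One checks this last condition is equivalent to condition $(ii)$ of the theorem: if $w(n)>0$ then $v(n)\geq w(n)/n>0$, so condition $(ii)$ is implied; conversely, since $x^*$ is decreasing and $x^*(\infty)=0$ (by Remark~\ref{rem:embedding}, as $\phi_{\gamma_{1,w}}(\infty)=W(\infty)=\infty$), strictness propagates — if $x^*(n)=x^*(n+1)$ and $w(m)>0$ for some $m\geq n$, then between index $n$ and the least such $m$ the rearrangement would have to stay constant, contradicting strictness at $m$.

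I expect the main (minor) obstacle to be the bookkeeping in this last equivalence, namely showing that the "$v(n)>0$" version of condition $(ii)$ coincides with the "$w(n)>0$" version. This is where one must use that $x^*$ is decreasing with $x^*(\infty)=0$: a decreasing nonnegative sequence that equals a positive constant on an infinite initial block would violate $x^*(\infty)=0$ unless the block is finite, and within any maximal constancy block of $x^*$ one can argue that having $v(n)>0$ forces, via the tail definition of $v$, some later index $m$ with $w(m)>0$ lying in or after that block, so condition $(ii)$ already yields the required strictness. The remaining verifications — that $T$ preserves supports and rearrangements, and that smoothness transfers across isometries — are routine. Since the authors explicitly say the theorem "follows immediately" from \cite{KamLeeLew} and Remark~\ref{rem:isometry:lorentz}, I would keep the write-up short: state the transfer principle, cite Theorem~1.10 of \cite{KamLeeLew}, and dispatch the weight-condition equivalence in a couple of lines.
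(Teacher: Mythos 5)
Your overall strategy is exactly the paper's: the paper offers no proof at all, merely observing that the statement "follows immediately" from Theorem~1.10 of \cite{KamLeeLew} together with the isometry of Remark~\ref{rem:isometry:lorentz} (which, note, is simply the identity map, since $\norm{x}{\gamma_{1,w}}{}=\norm{x}{d_{1,v}}{}$ for every $x$; there is no rearranging to worry about). So the transfer principle and the citation are the right skeleton.

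However, your execution has a genuine gap in the one place where something actually has to be checked. You quote the smooth-point criterion for $d_{1,v}$ as ``$z^*(n)>z^*(n+1)$ whenever $v(n)>0$'', and then try to argue that this is equivalent to condition $(ii)$ of the theorem. It is not, and your propagation argument is a non sequitur: from $x^*(n)=x^*(n+1)$ and $w(m)>0$ for some $m\ge n$ you cannot conclude that $x^*$ is constant on $\{n,\dots,m\}$. Concretely, take $w=(0,1,0,0,\dots)$ and $x$ with $x^*=(a,a,b,b,\dots)$, $a>b$: then condition $(ii)$ holds (only $x^*(2)>x^*(3)$ is required), while $v(1)=\sum_{k\ge1}w(k)/k>0$ and $x^*(1)=x^*(2)$, so your ``$v(n)>0$'' condition fails. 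The two conditions are genuinely inequivalent whenever $w$ has zero entries below a positive one. The resolution is that the criterion you should be importing from \cite{KamLeeLew} is stated in terms of the \emph{strict decrease} of the Lorentz weight, i.e. ``$z^*(n)>z^*(n+1)$ whenever $v(n)>v(n+1)$''; since $v(n)-v(n+1)=w(n)/n$, this is verbatim condition $(ii)$, and no bookkeeping (and no appeal to $x^*(\infty)=0$, which in any case you are not entitled to, as the theorem does not assume $W(\infty)=\infty$) is needed. As written, your argument either proves a strictly stronger, false criterion or rests on a false equivalence; with the corrected citation it collapses to the one-line translation the paper intends.
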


\begin{theorem}
	Let $w$ be a nonnegative weight sequence and $\psi(n)=n/\phi_{\gamma_{1,w}}(n)$ for any $n\in\mathbb{N}$ and $x\in{S_{m_\psi^0}}$. Then, an element $x$ is a smooth point in $m_\psi^0$ if and only if 
	\begin{equation*}
	\card\left\{n\in\mathbb{N}:x^{**}(n)\psi(n)=1\right\}=1.
	\end{equation*}
\end{theorem}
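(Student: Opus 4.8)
\emph{Proof proposal.} The plan is to use the identification $(m_\psi^0)^{\ast}\cong\gamma_{1,w}$ from Theorem~\ref{thm:predual} to describe \emph{all} norm-one functionals on $m_\psi^0$ that attain their norm at $x$, and then to check that this set reduces to a single point exactly when $\card\{n:x^{\ast\ast}(n)\psi(n)=1\}=1$. Throughout I would write $v(i)=\sum_{k\geq i}w(k)/k$ as in Remark~\ref{rem:isometry:lorentz}, and use $\phi_{\gamma_{1,w}}(n)=\sum_{i=1}^{n}v(i)$ together with $\norm{y}{\gamma_{1,w}}{}=\sum_{n}y^{\ast}(n)v(n)$ for $y\in\gamma_{1,w}$.

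First I would record what $S_{m_\psi^0}\neq\emptyset$ forces. If $W(\infty)<\infty$ then $\phi_{\gamma_{1,w}}(n)\leq W(\infty)<\infty$, so $x^{\ast\ast}(n)\psi(n)=\bigl(\sum_{i=1}^{n}x^{\ast}(i)\bigr)/\phi_{\gamma_{1,w}}(n)\geq x^{\ast}(1)/W(\infty)>0$ for every nonzero $x$, contradicting $x\in m_\psi^0$; hence $W(\infty)=\infty$. Consequently $v(i)>0$ for all $i$ and $\phi_{\gamma_{1,w}}$ is strictly increasing; $\gamma_{1,w}$ is order continuous by Theorem~\ref{thm:OC:Lorentz}, so $y^{\ast}(\infty)=0$ for $y\in\gamma_{1,w}$, and $x^{\ast}(\infty)=0$ for $x\in m_\psi$ by Remark~\ref{rem:embedding} since $\psi(\infty)=\infty$; and by Theorem~\ref{thm:predual} every $f\in(m_\psi^0)^{\ast}$ equals $f_y(u)=\sum_{n}u(n)y(n)$ for a unique $y\in\gamma_{1,w}$, this correspondence being isometric.

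The technical core is a sharp Hölder-type estimate. Writing $y^{\ast}(i)=\sum_{j\geq i}\Delta y(j)$ with $\Delta y(j):=y^{\ast}(j)-y^{\ast}(j+1)\geq 0$, interchanging summations (Tonelli), and using $\sum_{i=1}^{j}x^{\ast}(i)=j\,x^{\ast\ast}(j)\leq\norm{x}{m_\psi}{}\,\phi_{\gamma_{1,w}}(j)$, I would prove that for $x\in m_\psi$ and $y\in\gamma_{1,w}$,
\begin{equation*}
\sum_{n=1}^{\infty}x^{\ast}(n)y^{\ast}(n)=\sum_{j=1}^{\infty}\Delta y(j)\sum_{i=1}^{j}x^{\ast}(i)\leq\norm{x}{m_\psi}{}\sum_{j=1}^{\infty}\Delta y(j)\,\phi_{\gamma_{1,w}}(j)=\norm{x}{m_\psi}{}\,\norm{y}{\gamma_{1,w}}{},
\end{equation*}
with equality if and only if $x^{\ast\ast}(j)\psi(j)=\norm{x}{m_\psi}{}$ for every $j$ with $\Delta y(j)>0$. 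Feeding this into $\sum_{n}x(n)y(n)\leq\sum_{n}|x(n)||y(n)|\leq\sum_{n}x^{\ast}(n)y^{\ast}(n)$, I obtain that for $x\in S_{m_\psi^0}$ and $y\in S_{\gamma_{1,w}}$ the functional $f_y$ norms $x$ if and only if $(\mathrm{i})$ $\sg(x(n))=\sg(y(n))$ on $\supp x\cap\supp y$, $(\mathrm{ii})$ the Hardy--Littlewood inequality above is an equality, and $(\mathrm{iii})$ $\{j:\Delta y(j)>0\}\subseteq A:=\{n:x^{\ast\ast}(n)\psi(n)=1\}$. Note $A\neq\emptyset$, since $x^{\ast\ast}(n)\psi(n)\to 0$ forces the supremum $1=\norm{x}{m_\psi}{}$ to be attained.

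It remains to decide when the norming functional is unique. If $\card(A)\geq 2$, I would pick $n_1<n_2$ in $A$, a permutation $\sigma$ with $x^{\ast}=|x\circ\sigma|$, set $E_k=\{\sigma(1),\dots,\sigma(n_k)\}$ and $y_k=\phi_{\gamma_{1,w}}(n_k)^{-1}\,\sg(x)\,\chi_{E_k}$ for $k=1,2$; then $\norm{y_k}{\gamma_{1,w}}{}=1$ and $f_{y_k}(x)=\phi_{\gamma_{1,w}}(n_k)^{-1}\sum_{i=1}^{n_k}x^{\ast}(i)=x^{\ast\ast}(n_k)\psi(n_k)=1$, while $\sup_{n}|y_1(n)|=\phi_{\gamma_{1,w}}(n_1)^{-1}>\phi_{\gamma_{1,w}}(n_2)^{-1}=\sup_{n}|y_2(n)|$ by strict monotonicity of $\phi_{\gamma_{1,w}}$, so $f_{y_1}\neq f_{y_2}$ are two norming functionals and $x$ is not smooth. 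Conversely, suppose $A=\{n_0\}$. The key step, which I expect to be the main (though short) obstacle, is to show $x^{\ast}(n_0)>x^{\ast}(n_0+1)$: from $x^{\ast\ast}(n_0)\psi(n_0)=1$, i.e.\ $\sum_{i=1}^{n_0}x^{\ast}(i)=\phi_{\gamma_{1,w}}(n_0)$, together with $x^{\ast\ast}(n_0+1)\psi(n_0+1)<1$ and (when $n_0\geq 2$) $x^{\ast\ast}(n_0-1)\psi(n_0-1)<1$, one gets $x^{\ast}(n_0+1)<v(n_0+1)$ and $x^{\ast}(n_0)>v(n_0)$, while for $n_0=1$ one has $x^{\ast}(1)=\phi_{\gamma_{1,w}}(1)=v(1)$; since $v$ is decreasing, $x^{\ast}(n_0)>v(n_0)\geq v(n_0+1)>x^{\ast}(n_0+1)$ when $n_0\geq 2$, and $x^{\ast}(1)=v(1)\geq v(2)>x^{\ast}(2)$ when $n_0=1$, so in all cases $x^{\ast}(n_0)>x^{\ast}(n_0+1)$ and hence $x^{\ast}(n_0)>0$. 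Granting this: by $(\mathrm{iii})$ any norming $y$ has $\Delta y(j)=0$ for $j\neq n_0$, so $y^{\ast}=c\,\chi_{\{i\leq n_0\}}$ with $c=\phi_{\gamma_{1,w}}(n_0)^{-1}$ forced by $\norm{y}{\gamma_{1,w}}{}=1$, whence $|y|=c\,\chi_{E}$ with $\card(E)=n_0$; then $(\mathrm{ii})$ forces $\sum_{n\in E}|x(n)|=\sum_{i=1}^{n_0}x^{\ast}(i)$, and since $x^{\ast}(n_0)>x^{\ast}(n_0+1)$ the unique $n_0$-element set realizing this maximum is $E=\{n:|x(n)|\geq x^{\ast}(n_0)\}$; finally $|x|>0$ on $E$ together with $(\mathrm{i})$ pins down $\sg(y)$ on $E$. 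Thus the norming functional is unique and $x$ is smooth, completing the proof.
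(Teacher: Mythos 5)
Your proof is correct, but it takes a genuinely different route from the paper. The paper's proof is a two-line reduction: using Remark \ref{rem:isometry:lorentz} it identifies $m_\psi^0$ with the Marcinkiewicz space built from the weight $v(i)=\sum_{k\geq i}w(k)/k$ (so that $V(n)=\phi_{\gamma_{1,w}}(n)=n/\psi(n)$) and then simply invokes the known characterization of smooth points of $m_V^0$ from Theorem 1.5 in \cite{KamLeeLew}. You instead give a self-contained argument: you use the duality $(m_\psi^0)^*\cong\gamma_{1,w}$ of Theorem \ref{thm:predual} to parametrize all norming functionals, prove the sharp Abel-summation/Hardy--Littlewood estimate with its equality cases, and then count norming functionals directly, the decisive computation being that $A=\{n_0\}$ forces $x^*(n_0)>v(n_0)\geq v(n_0+1)>x^*(n_0+1)$ (with the obvious adjustment at $n_0=1$), which makes the optimal $n_0$-element set unique. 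This essentially reproves the cited result of \cite{KamLeeLew} in the present setting; what it buys is transparency and independence from the external reference, at the cost of length. Two small points worth making explicit if you write this up: (a) in the $\card(A)\geq 2$ construction you should note that $n\in A$ forces $x^*(i)>0$ for all $i\leq n$ (otherwise $x^{**}(n)\psi(n)<x^{**}(N)\psi(N)$ for $N=\card(\supp x)<n$), so that $y_k^*=\phi_{\gamma_{1,w}}(n_k)^{-1}\chi_{[1,n_k]}$ as claimed; and (b) the Abel summation $\sum_n x^*(n)y^*(n)=\sum_j\Delta y(j)\sum_{i\leq j}x^*(i)$ uses $y^*(\infty)=0$, which you correctly secured from $W(\infty)=\infty$.
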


\begin{proof}
	Let $v$ be a sequence given by \eqref{new:weight} and let $V(n)=\sum_{i=1}^n{v(i)}$. Then, by Remark \ref{rem:isometry:lorentz} we easily observe that $V(n)=\phi_{\gamma_{1,w}}(n)=\frac{n}{\psi(n)}$ for every $n\in\mathbb{N}$ and
	\begin{equation*}
	m_\psi^0=\left\{x\in{m_\psi}:\lim_{n\rightarrow\infty}\frac{1}{V(n)}\sum_{i=1}^{n}x^*(i)=0\right\}.
	\end{equation*}
	Hence, in view of Theorem 1.5 in \cite{KamLeeLew} we complete the proof.
\end{proof}

Directly, by Theorem 1.9 in \cite{KamLeeLew} and Remark \ref{rem:isometry:lorentz} and also Theorem \ref{thm:dual} we infer the following theorem.

\begin{theorem}
	Let $w$ be a nonnegative weight sequence and $\psi(n)=n/\phi_{\gamma_{1,w}}(n)$ for any $n\in\mathbb{N}$ and $x\in{S_{\gamma_{1,w}^*}}$. Then, an element $x$ is a smooth point in $B_{\gamma_{1,w}^*}$ if and only if there exists $n_0\in\mathbb{N}$ such that 
	\begin{equation*}
	x^{**}(n_0)\psi(n_0)=1>\sup_{n\neq{n_0}}\{x^{**}(n)\psi(n)\}.
	\end{equation*}
\end{theorem}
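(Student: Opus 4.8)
The plan is to transport everything along the isometry of Remark \ref{rem:isometry:lorentz} to the classical sequence Lorentz space $d_{1,v}$, apply the description of smooth points of $B_{d_{1,v}^{*}}$ obtained in \cite{KamLeeLew}, and then recognize the resulting condition, via Theorem \ref{thm:dual}, as the one in the statement.

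First I would set up the reduction. Let $v$ be given by \eqref{new:weight}. By Remark \ref{rem:isometry:lorentz} there is a surjective linear isometry $T\colon\gamma_{1,w}\to d_{1,v}$; in fact, reading the Fubini--Tonelli manipulation of that remark in both directions shows that $\gamma_{1,w}$ and $d_{1,v}$ coincide as normed sequence spaces, so $T$ may be taken to be the identity. Its adjoint $T^{*}\colon d_{1,v}^{*}\to\gamma_{1,w}^{*}$ is then again a surjective linear isometry, and hence so is $(T^{*})^{-1}=(T^{-1})^{*}$. Since a point of the unit sphere is a smooth point of the unit ball precisely when it admits a unique norming functional, and this property is preserved by surjective linear isometries of the ambient space, $x\in S_{\gamma_{1,w}^{*}}$ is a smooth point of $B_{\gamma_{1,w}^{*}}$ if and only if $(T^{-1})^{*}x\in S_{d_{1,v}^{*}}$ is a smooth point of $B_{d_{1,v}^{*}}$.

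Next I would match the data and quote \cite{KamLeeLew}. The computation carried out in the proof of Theorem \ref{thm:predual} gives $\phi_{d_{1,v}}(n)=\sum_{i=1}^{n}v(i)=\phi_{\gamma_{1,w}}(n)$ for every $n\in\mathbb{N}$, so the weight $\psi(n)=n/\phi_{\gamma_{1,w}}(n)$ of the statement is exactly $n/\phi_{d_{1,v}}(n)$. By Theorem \ref{thm:dual} (applied to $d_{1,v}$; the degenerate case $W(\infty)<\infty$, in which $\gamma_{1,w}$ is isomorphic to $\ell^{\infty}$ and the right-hand condition cannot hold, is excluded) the identification $\gamma_{1,w}^{*}=d_{1,v}^{*}=m_{\psi}$ is realized through the canonical pairing $\langle x,y\rangle=\sum_{n}x(n)y(n)$, under which an element of the dual is a sequence $x$ with $\norm{x}{m_{\psi}}{}=\sup_{n\in\mathbb{N}}x^{**}(n)\psi(n)$. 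Now Theorem 1.9 of \cite{KamLeeLew} characterizes the smooth points of $B_{d_{1,v}^{*}}$; written in this model it says precisely that such a point $x$, normalized so that $\sup_{n}x^{**}(n)\psi(n)=1$, is smooth if and only if there exists $n_0\in\mathbb{N}$ with $x^{**}(n_0)\psi(n_0)=1>\sup_{n\neq n_0}x^{**}(n)\psi(n)$. Combining this with the equivalence of the previous paragraph yields the theorem.

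The only genuine work is bookkeeping: one must check that the three identifications in play --- the isometry $T$ of Remark \ref{rem:isometry:lorentz}, the description of $\gamma_{1,w}^{*}$ in Theorem \ref{thm:dual}, and the concrete model of $d_{1,v}^{*}$ used in \cite{KamLeeLew} --- are mutually compatible, i.e.\ that the sequence $x^{**}$ and the weight $\psi$ refer to the same objects in all of them, and in particular that the permutation built into the proof of Theorem \ref{thm:dual} does not disturb the rearrangement $x^{**}$. Because $T$ is (essentially) the identity and all three duals are described via the same bilinear pairing $\sum_{n}x(n)y(n)$, this compatibility is automatic, but it is exactly the point that should be spelled out carefully; everything else is a direct citation.
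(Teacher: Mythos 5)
Your proposal is correct and follows exactly the route the paper takes: the paper's entire ``proof'' is the single sentence citing Theorem 1.9 of \cite{KamLeeLew}, Remark \ref{rem:isometry:lorentz} and Theorem \ref{thm:dual}, and you have simply spelled out the bookkeeping (the adjoint isometry, the identity $\phi_{d_{1,v}}=\phi_{\gamma_{1,w}}$, and the compatibility of the dual pairings) that the authors leave implicit. Your remark that the case $W(\infty)<\infty$ must be excluded is a fair observation about a hypothesis the paper's statement omits but which Theorem \ref{thm:dual} requires.
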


The last essential application of Theorem \ref{thm:dual} and Remark \ref{rem:isometry:lorentz}, in view of Theorem 2.2 in \cite{KamLeeLew}, is the next result which presents an equivalent condition for extreme points in the dual space $\gamma_{1,w}^*$ of the sequence Lorentz space $\gamma_{1,w}$. 

\begin{theorem}
	Let $w$ be a nonnegative weight sequence and $\psi(n)=n/\phi_{\gamma_{1,w}}(n)$ for any $n\in\mathbb{N}$ and $x\in{S_{\gamma_{1,w}^*}}$. Then, $x$ is an extreme point of $B_{\gamma_{1,w}^*}$ if and only if $x^*(n)=\sum_{i=n}^\infty\frac{w(i)}{i}$ for all $n\in\mathbb{N}$.
\end{theorem}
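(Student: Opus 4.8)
The plan is to reduce the statement to the corresponding characterization for the classical sequence Lorentz space $d_{1,v}$ obtained in \cite{KamLeeLew}, via the two concrete identifications furnished by Theorem \ref{thm:dual} and Remark \ref{rem:isometry:lorentz}. Since Theorem \ref{thm:dual} is the tool that represents $\gamma_{1,w}^*$ explicitly, throughout the argument we work in the regime $W(\infty)=\infty$, in which that representation is valid. \textbf{Step 1.} By Theorem \ref{thm:dual} the map $y\mapsto f_y$, $f_y(x)=\sum_{n=1}^\infty x(n)y(n)$, is a surjective linear isometry of $m_\psi$ onto $\gamma_{1,w}^*$ with $\psi(n)=n/\phi_{\gamma_{1,w}}(n)$; hence $x\in S_{\gamma_{1,w}^*}$ is an extreme point of $B_{\gamma_{1,w}^*}$ if and only if its representing sequence $y\in S_{m_\psi}$ is an extreme point of $B_{m_\psi}$. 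So it suffices to describe the extreme points of $B_{m_\psi}$.

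\textbf{Step 2.} Put $v(i)=\sum_{n=i}^\infty w(n)/n$ as in Remark \ref{rem:isometry:lorentz}. The computation there gives $\|x\|_{\gamma_{1,w}}=\|x\|_{d_{1,v}}$ for every $x\in\ell^0$, so $\gamma_{1,w}=d_{1,v}$ isometrically; moreover, exactly as in the proof of Theorem \ref{thm:predual}, $\phi_{\gamma_{1,w}}(n)=\sum_{i=1}^n v(i)=:V(n)$, whence $\psi(n)=n/V(n)$ is precisely the sequence for which $m_\psi$ is the associate (hence, by order continuity, the dual) space of $d_{1,v}$ under the same pairing $y\mapsto f_y$, and $V(\infty)=\infty$ since $W(\infty)=\infty$. \textbf{Step 3.} Theorem 2.2 in \cite{KamLeeLew} characterizes the extreme points of $B_{d_{1,v}^*}$: a norm one functional with representing sequence $y$ is extreme if and only if $y^*(n)=v(n)$ for all $n\in\mathbb{N}$ (note $\|v\|_{m_\psi}=\sup_n V(n)/V(n)=1$, and $v\neq 0$ by nontriviality of $w$, so $v$ does lie on the sphere). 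Combining Steps 1--3, and using that both identifications act as the identity on sequences and therefore preserve decreasing rearrangements, we conclude that $x\in S_{\gamma_{1,w}^*}$ is an extreme point of $B_{\gamma_{1,w}^*}$ if and only if $x^*(n)=v(n)=\sum_{i=n}^\infty w(i)/i$ for every $n\in\mathbb{N}$, which is the assertion.

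\textbf{Main obstacle.} The only genuine point requiring care is bookkeeping: one must check that the isometries $\gamma_{1,w}^*\cong m_\psi$, $\gamma_{1,w}\cong d_{1,v}$, and the induced $d_{1,v}^*\cong m_\psi$ are mutually compatible and rearrangement-respecting, so that the condition ``$y^*=v$'' transfers verbatim; this is painless here because every norm in sight depends only on the decreasing rearrangement and the Lorentz isometry of Remark \ref{rem:isometry:lorentz} is literally the identity map on $\ell^0$. Beyond that, one only needs to verify the hypotheses of Theorem 2.2 in \cite{KamLeeLew}, namely that $\phi_{\gamma_{1,w}}$ (equivalently $V$) is quasiconcave with $V(\infty)=\infty$: strict monotonicity $\phi_{\gamma_{1,w}}(n+1)>\phi_{\gamma_{1,w}}(n)>0$ was already established in the proof of Theorem \ref{thm:extreme} from $\phi_{\gamma_{1,w}}(n)=W(n)+W_1(n)$ together with $W(\infty)=\infty$, the decrease of $\phi_{\gamma_{1,w}}(n)/n$ is immediate from the same formula, and $V(\infty)=\infty\iff W(\infty)=\infty$ was shown in the proof of Theorem \ref{thm:predual}. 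No lengthy computation is needed beyond these checks.
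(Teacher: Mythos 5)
Your argument is exactly the paper's: the paper states this theorem without a written proof, deriving it "directly" from Theorem \ref{thm:dual} (the isometry $\gamma_{1,w}^*\cong m_\psi$), Remark \ref{rem:isometry:lorentz} (the identification $\gamma_{1,w}=d_{1,v}$ with $v(i)=\sum_{n=i}^\infty w(n)/n$), and Theorem 2.2 of \cite{KamLeeLew}, which is precisely your chain of Steps 1--3. Your additional bookkeeping (working under $W(\infty)=\infty$ so the dual representation is valid, and checking quasiconcavity of $\phi_{\gamma_{1,w}}$ and $V(\infty)=\infty$) is correct and in fact makes the implicit hypotheses of the paper's citation explicit.
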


\begin{remark}
	 Although applying Theorem 2.6 in \cite{KamLeeLew} and Remark \ref{rem:isometry:lorentz} we are able to find successfully an equivalent condition for an extreme point in the sequence Lorentz space $\gamma_{1,w}$, with $w$ a nonnegative weight sequence, we present the proof of this problem with all details (see Theorem \ref{thm:extreme}). It is worth mentioning that the techniques, that was presented in the proof of Theorem \ref{thm:extreme}, might be interesting for readers and applicable to search a complete characteristic of an extreme point in $\gamma_{p,w}$ with $1<p<\infty$.  
\end{remark}

\subsection*{Acknowledgement}

$^*$The first author (Maciej Ciesielski) is supported by the Ministry of Science and Higher Education of Poland, grant number 04/43/DSPB/0094.

$\begin{array}{lr}
\textnormal{\small Maciej Ciesielski} & \textnormal{\small Grzegorz Lewicki}\\
\textnormal{\small Institute of Mathematics} & \textnormal{\small Department of Mathematics}\\
\textnormal{\small Faculty of Electrical Engineering} & \textnormal{\small and Computer Science}\\
\textnormal{\small Pozna\'{n} University of Technology} & \textnormal{\small Jagiellonian University}\\
\textnormal{\small Piotrowo 3A, 60-965 Pozna\'{n}, Poland} & \textnormal{\small\qquad \L ojasiewicza 6, 30-348 Krak\'ow, Poland}\\
\textnormal{\small email: maciej.ciesielski@put.poznan.pl} & \textnormal{\small email: grzegorz.lewicki@im.uj.edu.pl}
\end{array}$

\end{document}